\newcommand{\FuncCall}[2]{\texttt{\bfseries #1(#2)}}
\newtheorem{theorem}{Theorem}
\newtheorem{lemma}{Lemma}
\newcommand{\pdpd}[2]{\frac{\partial #1}{\partial #2}}
\renewcommand{\Re}{\operatorname{Re}}
\newcommand{\RR}{{\mathbb{R}}}
\newcommand{\TT}{{\mathbb{T}}}
\newcommand{\ZZ}{{\mathbb{Z}}}
\theoremstyle{definition}
\newtheorem{definition}{Definition}
\theoremstyle{remark}
\title[A two-grid method for Helmholtz problems]%
{A two-grid method with dispersion matching for finite-element Helmholtz
  problems}
\author[C.C.\ Stolk]{Christiaan C.\ Stolk}
\address{University of Amsterdam, Korteweg-de Vries Institute, POBox
  94248, 1090 GE Amsterdam, The Netherlands}
\email{C.C.Stolk@uva.nl}
\begin{document}
\newcommand{\NewInRevision}[1]{{#1}}

\begin{abstract}
  This work is about a new two-level solver for Helmholtz equations
  discretized by finite elements. The method is inspired by two-grid methods
  for finite-difference Helmholtz problems as well as by previous work on
  two-level domain-decomposition methods. For the coarse-level
  discretization, a compact-stencil finite-difference method is used
  that minimizes dispersion errors. The smoother involves a
  domain-decomposition solver applied to a complex-shifted Helmholtz
  operator.  Local Fourier analysis shows the method is convergent if
  the number of degrees of freedom per wavelength is larger than some
  lower bound that depends on the order, e.g.\ more than 8 for order
  4. In numerical tests, with problem sizes up to 80 wavelenghts,
  convergence was fast, and almost independent of problem size unlike
  what is observed for conventional methods.
 Analysis and comparison with dispersion-error
  data shows that, for good convergence of a two-grid method for
  Helmholtz problems, it is essential that fine- and coarse-level
  dispersion relations closely match.
\end{abstract}

\maketitle

\section{Introduction}

In this paper we study the numerical solution of high-frequency
Helmholtz problems discretized by finite elements. Such problems, in
3-D, are still highly challenging.  The same is true for other
high-frequency time-harmonic wave problems such as those for elastic
and electromagnetic fields.

Extensive research has been done in this area, and there is wide range
of methods, typically with different pros and cons. We will say a few
things here.  For direct methods, the $N^{4/3}$ scaling leads to very
large memory requirements for medium to large problems, which
has motivated extensive research into iterative methods.  For iterative
methods different types of behavior have been observed. Some
methods, like some domain decomposition methods, feature expensive (or
difficult to parallellize) steps, but require relatively few of those
\cite{engquist2011sweeping, stolk2013rapidly, vion2014double,gander2019class}. For
others, including shifted-Laplacian methods or time-domain methods, the cost per
iteration is relatively small, but many of those are required
\cite{erlangga2006novel, CalandraEtAl2013, grote2019controllability,
  appelo2020waveholtz, stolk2021time}. In this
case there is often a notable difference between finite-element and
finite-difference discretizations. For finite differences, the steps
in the iteration can be much cheaper due
to the high sparsity and regular matrix structure, especially when GPUs
are used \cite{knibbe20133d,TurkelEtAl2013,stolk2021time,CalandraEtAl2013}.
As already mentioned we consider finite-element discretizations.

The specific topic of this work is a new two-grid approach. The costly
steps in such algorithms are smoothing steps, and coarse-level
solves. There are a couple of considerations which will determine the
design of the algorithm:
\begin{itemize}
\item
  As usual, the smoother should perform local computations in the sense of a
  Jacobi or a domain decomposition method. These are easy to
  parallellize.
\item
  In the coarse-level problem, the number of
  degrees of freedom is reduced by a factor $2^d$, where $d$ is the
  dimension, compared to the original problem. So a factor 8 in 3-D.
  The coarse problem is solved by some solver (here a direct solver,
  but in principle, approximate solvers may be used
  \cite{stolk2017improved}). Depending on this
  solver, the cost in terms of memory or computation time can be more
  than $2^d$ smaller compared to the original problem.
  While being much smaller than the original problem, the coarse-level
  problem remains quite large, so the number of iterations should be
  relatively small.
\item
  For good two-grid convergence we believe that the coarse-level
  solutions should well reproduce the propagating waves of the
  original solutions. In particular waves should not be too strongly
  damped and coarse-level wavelengths should be close to the correct
  wavelengths, i.e.\ dispersion errors should be small. Indeed, having
  small coarse-level dispersion errors, really makes the difference
  between good and poor convergence, as was argued in previous works
  for finite-difference discretizations \cite{StolkEtAl2014,
    stolk2017improved}.  One may wonder whether this requirement is
  not too strict. However, finite-difference discretizations exist
  with very small dispersion errors on meshes using as little as three
  degrees of freedom per wavelength
  \cite{BabuskaEtAl1995,StolkEtAl2014,stolk2016dispersion}.
\end{itemize}

The last bullet point motivates this paper:
we propose a two-grid method using a finite-difference method with
very small dispersion errors at the coarse level. In four words this
might be called a {\em dispersion matching two-grid method}.
Of course there are also other components to a two-grid method.
For the smoother we propose an overlapping domain decomposition
method applied to a complex-shifted Helmholtz problem, somewhat
comparable to methods discussed in \cite{graham2017domain},
cf.\ \cite{kimn2013shifted}.
The further description of the method is in section~\ref{sec:two-grid-method}
while subsection \ref{subsec:QSFEM_coarse} contains details about
how the finite-difference coarse-level discretization is
embedded in a finite-element two-grid method.

The main results of the paper are as follows: We present a new
two-grid method along the above lines for 2-D finite-element Helmholtz
problems, involving various discretization orders and boundary
conditions, see section~\ref{sec:two-grid-method}.  Convergence
factors of the method are estimated using local Fourier analysis and
compared to convergence factors for standard Galerkin coarsening, cf.\
Figures~\ref{fig:rho_lfa_fd_vs_pcoarsen}
and~\ref{fig:rho_lfa_mgpar}.
The convergence factors depend on the
number of dofs per wavelength and (i) are much better than convergence
factors for standard Galerkin coarsening, and (ii) are good basically
whenever the fine mesh is fine enough to guarantee small dispersion
errors for the original problem%
\footnote{%
\NewInRevision{%
Convergence deteriorates due to wavelength differences between
coarse and fine level discretizations. Fine level
dispersion errors may contribute to these, which is related to the
fact that in this work the coarse level
discretization is chosen to minimize errors with the true dispersion
relation, not with the fine level dispersion relation.}}.
An approximate relation between
asymptotic convergence rate and dispersion errors is described in
sections~\ref{sec:lfa_toy} and~\ref{sec:lfa_full_method}.  Numerical
experiments confirm good convergence for finite elements of different
orders and all domain sizes that where studied (up to 80 wavelengths),
see section~\ref{sec:numerical_results}.

An important restriction is that we use regular meshes. The reason is
that this allows to do local Fourier analysis. Also it is not
immediately clear how to generalize the optimized finite differences
of \cite{BabuskaEtAl1995,StolkEtAl2014,stolk2016dispersion}
to general meshes. Also we do not pursue here the
generalization to 3-D. While the formulation of the method in 3-D is
straightforward, computations in 3-D are much more demanding
in terms of implementation and hardware and the potential of the
method is already clearly illustrated by the 2-D results.

It can be observed that there is a certain similarity of our method with the
method of
\cite{graham2017domain}: This paper also describes a two level method,
with overlapping domain decomposition at the fine level.  Differences
with our work are that we do different coarsening, using a factor of 2
in each direction, that we keep at least three degrees of freedom per
wavelength in the coarse mesh, and that we use schemes with very small
dispersion errors at the coarse level. This leads to better
convergence. Also our numerical results are much more extensive,
including in particular higher order elements. On the other hand
\cite{graham2017domain} rigorously proves some convergence results,
while we use local Fourier analysis.
Other related work includes work on dispersion and pollution errors in finite
elements \cite{ihlenburg1995dispersion, babuska1997pollution}, on
coarse-level discretization in time-harmonic elasticity
\cite{yovel2024lfa}, and on dispersion correction
\cite{cocquet2021closed, cocquet2024asymptotic}.

The rest of the paper is organized as follows. In
section~\ref{sec:continuous_Helmholtz} we formulate the Helmholtz
problem. Section~\ref{sec:discretizations} covers the relevant
discretizations. The new two-grid method is then described in
section~\ref{sec:two-grid-method}. Local Fourier analysis is applied
to a 1-D toy problem in section~\ref{sec:lfa_toy} and to the full
problem in section~\ref{sec:lfa_full_method}.
Section~\ref{sec:numerical_results} contains numerical
results. Finally section~\ref{sec:discussion_conclusions} contains a
discussion and some conclusions.

\section{Continuous Helmholtz problem\label{sec:continuous_Helmholtz}}

The following Helmholtz problem is considered:
 \begin{equation} \label{eq:Helmholtz_eq}
   - \Delta u - \left( k(x)^2 + i \epsilon(x)\right) u = f(x)
\qquad\qquad \text{in $\Omega$} ,
\end{equation}
with boundary conditions of Dirichlet, Neumann or absorbing (Robin)
type. It is
assumed that there are positive constants $C_1, C_2, C_3$ such that
$C_1 \le k(x) \le C_2$ and $0 \le \epsilon(x) \le C_3$. Because of the
square-mesh assumption, see the introduction, the domain is assumed to
be a union of squares. The boundary is assumed to be a union
$\partial \Omega = \Gamma_0 \cup \Gamma_1 \cup \Gamma_{\rm abs}$
where $\Gamma_0, \Gamma_1, \Gamma_{\rm abs}$ overlap only in
codimension 2 sets, and boundary conditions are of Dirichlet type on
$\Gamma_0$, of Neumann type on $\Gamma_1$ and of absorbing type on
$\Gamma_{\rm abs}$, so
\begin{equation}
  \begin{aligned}
    u = {}& 0 \qquad x \in \Gamma_0 ,
    \\
    \pdpd{u}{n} = {}& 0 \qquad x \in \Gamma_1 ,
    \\
    \pdpd{u}{n} - i k u = {}& 0 , \qquad x \in \Gamma_{\rm abs} .
  \end{aligned}
\end{equation}
In the numerical examples we always assume that absorption is present
in the problem at least on part of the boundary, either via absorbing
boundary conditions, so non-empty $\Gamma_{\rm abs}$, or via absorbing
layers, i.e.\ strictly positive $\epsilon(x)$ on some layer at a
boundary.

The associated bilinear form for finite-element discretization is
\begin{equation} \label{eq:bilin_form}
  \begin{aligned}
  a( u,v)
  = {}& a(u,v; \Omega, \Gamma_{\rm abs})
  \\
  = {}& \int_\Omega \nabla u(x) \cdot \nabla v(x) \, dx
  - \int_\Omega (k(x)^2+i\epsilon) u v \, dx
  - \int_{\Gamma_{\rm abs}}  i k u v \, ds .
\end{aligned}
\end{equation}
The notation $a(u,v; \Omega, \Gamma_{\rm abs})$, explicitly indicating
the domains of the integrals, will be used below to define
bilinear forms on subdomains.
As usual, the Dirichlet boundary condition is enforced by requiring
solutions to be in 
$H_0^1(\Omega; \Gamma_{0}) = \{ u \in H^1(\Omega) \, : \,
u(x) = 0 \text{ for } x \in \Gamma_0 \}$.

\section{Discretizations\label{sec:discretizations}}

This work involves two types of discretizations, one on the fine mesh
and one on the coarse mesh of the two-grid method. The linear system
to be solved is a Helmholtz problem discretized by a continuous
Galerkin method on the fine mesh. Internally in the solver, as a
coarse-level discretization, there is an optimized finite-difference
discretization on the coarse mesh. Both are discussed in this section.

\subsection{Finite-element discretization}

Given the bilinear form in (\ref{eq:bilin_form}) one needs to specify
\NewInRevision{a mesh, a finite-element space and finite element basis functions
to describe the discretization}.  It was
already indicated that regular meshes are used. In fact, 
it is assumed that the domain $\Omega$ is a union
of cells of a square mesh, which will be called the `underlying'
mesh. This mesh is either used directly (square mesh) or the
square cells are split into triangles (triangular mesh).
\NewInRevision{Examples are given in Figure~\ref{fig:simple_meshes}.}
\begin{figure}
  \begin{center}
    \includegraphics[width=3cm]{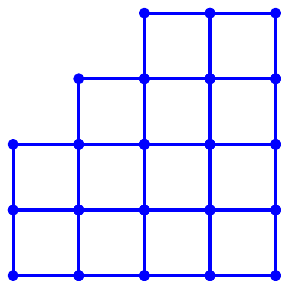}
\hspace*{1cm}
    \includegraphics[width=3cm]{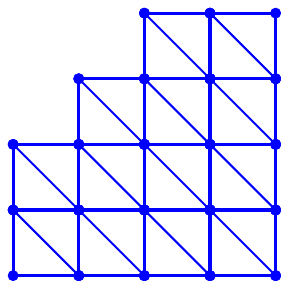}
  \end{center}
  \caption{Examples of a square and a triangular mesh of the type
    considered.}\label{fig:simple_meshes}
\end{figure}
In either case the spacing of the underlying mesh is denoted $h$.
\NewInRevision{Standard continuous Galerkin finite element spaces of
  order $p$ are used with $p=2,4,6$ or $8$.
  On triangular elements, the set of polynomials is hence
  $\mathcal{P}_p$, while on the square mesh mesh cells,
  the set of polynomial is $\mathcal{Q}_p$
  (in the notation of \cite{brenner2008mathematical}).
  Finite element computations for this paper were done using the
software package NGSolve \cite{ngsolve}. The H1 finite element space of
NGSolve was used, which is a continuous Galerkin space using
hierarchical element basis functions.}


\subsection{Finite-difference discretizations with minimal wavelength
  errors: Quasi-stabilized FEM\label{subsec:fd_discretization}}

In \cite{BabuskaEtAl1995}, a 2-D finite-difference discretization with
minimal wavelength errors was introduced called quasi-stabilized FEM
(QSFEM). This discretization, with a multiplicative scaling, will be
used as coarse-level discretization. In 3-D, a method with similar
properties was introduced in \cite{stolk2016dispersion}.  We will
briefly explain the special properties of QSFEM.

In case $k$ is constant, $\epsilon = f = 0$ and $\Omega = \RR^d$, the
Helmholtz equation has `propagating' plane wave solutions
$e^{i x \cdot \xi}$, where $\xi$ satisfies $\| \xi \| = k$. The
wavelength of these solutions is of course
$\lambda = \frac{2 \pi}{k}$. For good two-grid convergence it is
important that the coarse-level discretization has propagating wave
solutions with wavelength as close as possible to the true wavelength
(actually to the wavelength of the fine level discretization, but this
in turn should be close to the true wavelength) \cite{StolkEtAl2014}.

When a (constant coefficient) finite-difference operator acts on a
plane wave, the latter is multiplied by the symbol. Indeed, consider a
2-D grid $x_\alpha = h \alpha$, $\alpha \in \ZZ^2$, with $u_\alpha$
denoting a function on such a grid. 
Let $P$ be a finite-difference operator on such functions
\begin{equation}
  (P u)_\alpha = \sum_{\gamma \in S} p_\gamma u_{\alpha + \gamma} ,
\end{equation}
where $S$ is the set of stencil coefficients.
If such an operator is applied to a plane wave $e^{i x \cdot \xi}$, the
result is $\sigma_P(\xi) e^{i x \cdot \xi}$, where the multiplicative
factor $\sigma_P(\xi)$ is called the symbol and given by
\begin{equation} \label{eq:symbol_sigma_P}
  \sigma_P(\xi) = \sum_{\gamma \in S} p_\gamma e^{i h \gamma \cdot \xi} .
\end{equation}
If $P$ is a discretization of $-\Delta - k^2$, the propagating wave
solutions are those plane waves $u_\alpha = e^{i \xi \cdot x_\alpha}$
with $\xi$ such that $\sigma_P(\xi) = 0$.

Let $Z_P$ denote the zeroset of $\sigma_P$.
It was observed in \cite{BabuskaEtAl1995} that, to minimize pollution
errors, a notion of distance of this zero-set to the true zero-set $\{ \xi
\,:\, \| \xi \| = k \}$, defined by
\begin{equation}
  \mathcal{D}_P
  :=
  \max_{t \in [-\pi,\pi]}
  \min_{\xi \in Z_P}
  \left\| k \begin{pmatrix} \cos t\\ \sin t \end{pmatrix} - \xi \right\|
\end{equation}
should be minimal. 
Furthermore, for the 2-D case, the authors
constructed a finite-difference operator
with a compact (i.e.\ $3 \times 3$) stencil
that satisfied this criterion, and is given (in stencil notation) by
\begin{equation} \label{eq:P_QSFEM}
  P = \begin{bmatrix}
    P_2 & P_1 & P_2 \\
    P_1 & P_0 & P_1 \\
    P_2 & P_1 & P_2 
  \end{bmatrix}
\end{equation}
with
\begin{equation}
  \begin{aligned}
    P_0 = {}& \tilde{q}_0(\eta)  := 4
    \\
    P_1 = {}& \tilde{q}_1(\eta)  := 2 \frac{c_1(\eta) s_1(\eta) - c_2(\eta) s_2(\eta)}
    {c_2(\eta) s_2(\eta) (c_1(\eta)+s_1(\eta))
      - c_1(\eta) s_1(\eta) (c_2(\eta)+s_2(\eta))}
    \\
    P_2 = {}& \tilde{q}_2(\eta)  :=\frac{c_2(\eta) + s_2(\eta) - c_1(\eta) - s_1(\eta)}
    {c_2(\eta) s_2(\eta) (c_1(\eta)+s_1(\eta))
      - c_1(\eta) s_1(\eta) (c_2(\eta)+s_2(\eta))}
  \end{aligned}
\end{equation}
where $\eta = k h$, 
$c_1(\eta) = \cos \left( \eta \cos \frac{\pi}{16} \right)$, 
$s_1(\eta) = \cos \left( \eta \sin \frac{\pi}{16} \right)$, 
$c_2(\eta) = \cos \left( \eta \cos \frac{3\pi}{16} \right)$, and
$s_2(\eta) = \cos \left( \eta \sin \frac{3\pi}{16} \right)$.  
This is the QSFEM discretization referred to above.

Note that the stencil still needs to be scaled in
order to be a discretization of the Helmholtz equation.
The correctly scaled operator should satisfy
\NewInRevision{$\sigma_P(0) = - k^2$}. In this case
the symbol equals
\begin{equation}
  \sigma_P(\xi) = P_0 + 2 P_1 (\cos (h\xi_1) + \cos (h\xi_2))
  + 4 P_2 \cos (h\xi_1) \cos (h\xi_2) 
\end{equation}
and it is easy to check numerically
that \NewInRevision{$\sigma_P(0) \neq - k^2$}. Therefore, we scale the
operator by
\begin{equation} \label{eq:scaling_factor_QSFEM}
  h^{-2} N(hk) , \qquad
  \NewInRevision{ N(\eta) = \frac{- \eta^2}{\sigma_P(0)} . }
\end{equation}

\subsection{Coarse-mesh discretization using QSFEM\label{subsec:QSFEM_coarse}}

As explained in the introduction we propose to use QSFEM as a
coarse-level discretization for a finite-element two-grid solver.
However, in the finite-element two-grid algorithm, the coarse-level
discretization is itself also a finite-element discretization and not
a finite-difference discretization like QSFEM. The question is
therefore whether or how QSFEM can be incorporated in this algorithm.

To address this we exploit the fact that a first order finite-element
method is also a scaled finite-difference method. For example, in 2-D, the
matrix $A^{\rm FE,S}$, corresponding to the bilinear form
$\int \nabla u \cdot \nabla v \, dx$ is
\begin{equation}
  A^{\rm FE,S} = \begin{bmatrix} -\frac{1}{3} & -\frac{1}{3} & -\frac{1}{3}\\
    -\frac{1}{3} & \frac{8}{3} & -\frac{1}{3}\\
  -\frac{1}{3} & -\frac{1}{3} & -\frac{1}{3}\end{bmatrix} .
\end{equation}
This is $h^2$ times a finite-difference discretization of $-\Delta$.
The matrix $A^{\rm FE,M}$, corresponding to bilinear form
$\int u v \, dx$ is
\begin{equation}
  A^{\rm FE,M} = h^2 \begin{bmatrix} \frac{1}{36} & \frac{1}{9} & \frac{1}{36}\\
    \frac{1}{9} & \frac{4}{9} & \frac{1}{9}\\
  \frac{1}{36} & \frac{1}{9} & \frac{1}{36}\end{bmatrix} .
\end{equation}
which is $h^2$ times a second order discretization of the identity.
The full operator $-\Delta - (k^2 + i \epsilon)$ leads to the finite
element matrix (at least the internal matrix elements)
\begin{equation} \label{eq:fd_fe_constant1}
  A^{\rm FE,S} - (k^2 + i \epsilon)  A^{\rm FE,M}
\end{equation}
considering constant $k$ and $\epsilon$, which is hence $h^2$ times a
finite-difference discretization of the same operator.

For internal points and constant $k, \epsilon$, it is quite straightforward to 
obtain a QSFEM discretization with finite-element like scaling. In
(\ref{eq:fd_fe_constant1}) one 
replaces the part $A^{\rm FE,S} - k^2 A^{\rm FE,M}$
with $h^2$ times the QSFEM operator $h^{-2} N(hk )P$, hence by $N(hk)
P$ (see (\ref{eq:P_QSFEM}) and
(\ref{eq:scaling_factor_QSFEM})). 
\NewInRevision{Nothing is done with the reminaing
mass matrix $ i \epsilon  A^{\rm FE,M}$.}

The extension to variable $k$ and $\epsilon$ will be done in a way
that resembles the definition of finite-element matrices. Let $\alpha
\in \ZZ^d$ be an index for the degrees of freedom, which correspond to
the vertices of the mesh, denoted $v_\alpha$, and let $C(\alpha,\beta)$ denote the cells
of which both $v_\alpha$ and $v_\beta$ are vertices. In 2-D there can
be 4 elements (if $\alpha = \beta$), 2 elements (if $v_\alpha, v_\beta$ on opposite
sides of an edge), 1 element (if $v_\alpha,v_\beta$ on opposite
sides of a face) or 0 elements  (otherwise).
We will assume that $k$ and $\epsilon$ are constant over
cells, and denote these values by $k(c)$, $\epsilon(c)$, if $c$ is a cell.
The following functions will be invoked for getting elementwise matrix contributions 
\begin{equation}
  q_\gamma(\eta) = \left\{
    \begin{array}{ll}
      \frac{1}{4} N(\eta) P_0(\eta)
      & \text{if $\gamma = (0,0)$}
      \\
      \frac{1}{2} N(\eta) P_1(\eta)
      & \text{if $\gamma =(\pm 1,0)$ or $(0,\pm 1)$}
      \\
      N(\eta) P_2(\eta)
      & \text{if $\gamma = (\pm 1, \pm 1)$ or $(\pm 1, \mp 1)$.}
    \end{array}
  \right.
\end{equation}
The internal matrix elements $A_{\rm q}$ (the subscript referring to
QSFEM) are then given by
\begin{equation} \label{eq:element_wise_qsfem}
  (A_{\rm q})_{\alpha,\beta}
  = \sum_{c \in C(\alpha,\beta)} \left( q_{\beta-\alpha}(h k(c))
  - \int_c i \epsilon(c) \phi_\alpha(x) \phi_\beta(x) \, dx \right) .
\end{equation}
where $\phi_\alpha$ is the basis function for vertex $v_\alpha$.

Boundaries are treated as follows. First the sets $C(\alpha,\beta)$ are
defined to only contain cells that are part of the domain, and
secondly boundary terms like those in (\ref{eq:bilin_form}) are included
in the bilinear form.  Thus the following `quasi finite element'
version of the QSFEM discretization is obtained
\begin{equation}
  (A_{\rm q})_{\alpha,\beta}
  =
\sum_{c \in C(\alpha,\beta)} q_{\beta-\alpha}(h k(c))
  - \int_\Omega i\epsilon(x) \phi_\alpha(x) \phi_\beta(x) \, dx
  - \int_{\Gamma_{\rm abs}}  i k \phi_\alpha(x) \phi_\beta(x) \, ds .
\end{equation}
One can show that this leads to correct finite-difference boundary
conditions.

\section{The two-grid method\label{sec:two-grid-method}}

The two-grid methods proposed here have a conventional
structure, see algorithm~\ref{alg:two-grid_step}.
A coarse-grid correction is preceded and followed by
$n_{\rm s } \ge 1$ smoother steps. The coarse-grid correction also has a
conventional structure: The residual $r$ is mapped to an element
$r_{\rm c}$ of an appropriate coarse space by \NewInRevision{the restriction operator
which is} the adjoint
of a prolongation map $I_{\rm P}$; a discrete Helmholtz equation on
the coarse space is solved with right hand side $r_{\rm c}$; the
result is mapped back to the fine space by $I_{\rm P}$
and used to update the current approximation of the solution. 
The coarse-grid correction may be applied with a relaxation factor
$\omega_{\rm c}$.
Natural values for the parameters $n_{\rm s}$ and $\omega_{\rm c}$
are $n_{\rm s} = 1$ and
$\omega_{\rm c} = 1$, the effect of other values is studied in
sections~\ref{sec:lfa_full_method} and~\ref{sec:numerical_results}.

To describe the method, a couple of things are hence to be specified:
the smoother algorithm {\ttfamily CSDDSmoother}, the coarse
space, the prolongation map $I_{\rm P}$ and the coarse-level discrete
Helmholtz operator $A_{\rm c}$. This is done in the next subsections.

Note that algorithm~\ref{alg:two-grid_step}
updates a current approximate solution, like in a fixed-point
iteration. A left-preconditioner (map from $f$
to an approximate solution $u$) is obtained by setting $u =0$ as input
and taking the output value of $u$ as the result of the map.

\begin{algorithm}
  \caption{Two-grid step that updates $u$\label{alg:two-grid_step}}
  \Procedure{TwoGridStep($u, f$)}{
    \For{$i=1, \ldots, n_{\rm s}$}{
      \FuncCall{CSDDSmoother}{$u, f$} }
    $r_{\rm c} \gets I_{\rm P}^* (f - A u)$ \tcp*{Get coarse-grid residual}
    solve $u_{\rm c}$ from $A_{\rm c} u_{\rm c} = r_{\rm c}$ \tcp*{Get coarse-grid correction} 
$u \gets u + \omega_{\rm c} I_{\rm P} u_{\rm c}$ \tcp*{Apply coarse-grid correction}
    \For{$i=1, \ldots, n_{\rm s}$}{
      \FuncCall{CSDDSmoother}{$u, f$} }
  }
\end{algorithm}

\subsection{Smoother}

For finite-difference Helmholtz equations, after careful testing, a
few iterations of $\omega$-Jacobi, \NewInRevision{relaxing individual degrees of
freedom,} was found to be a good smoother \cite{StolkEtAl2014}.
\NewInRevision{For finite element equations in general,
  block relaxation methods, lead to better convergence. These are
  closely related to domain decomposition methods, and both are
  natural candidates for smoothers.}
Here a domain decomposition method will be used, however it will be
applied to a {\em complex-shifted Helmholtz equation}
\begin{equation}
  - \Delta u - ( k^2 +i \alpha_{\rm s} k^2 + i \epsilon ) ,
\end{equation}
and not to the original Helmholtz equation.

There are multiple reasons for the complex shift. Without it, domain
decomposition methods either converge poorly, or require many
sequential iterations that are difficult to parallellize.  Obviously
this is undesirable for a smoother.  On the other hand, for the large
wave numbers \NewInRevision{represented in the discretization},
the $-\Delta$ is the dominant contribution to the
Helmholtz operator $-\Delta - k^2$. That remains the case when a
complex shift is applied and a reduction in the large wave-number part
of the residual can therefore still be expected when a complex shift
is present. A complex shift has frequently been used before in
multilevel preconditioners for Helmholtz operators, see e.g.\
\cite{erlangga2006novel, kimn2013shifted, gander2015applying, graham2017domain},
in which case it is present in the both the smoother and the
coarse-level operators and not only in the smoother as here.

The first step in describing the smoother is the definition of a
complex-shifted Helmholtz operator. 
The matrix for this operator will
be denoted by $A_{\rm s}$. It is obtained by replacing the
coefficient $k^2+ i \epsilon$ in the bilinear form by
$k^2 + i \alpha_{\rm s} k^2 + i \epsilon$, 
where $\alpha_{\rm s}$ is a constant to be fixed later
(see sections~\ref{sec:lfa_full_method} and~\ref{sec:numerical_results}).

In principle, the smoother should update the approximate solution with
$A_{\rm s}^{-1}$ times the residual. However, since the inverse
$A_{\rm s}^{-1}$ is in general difficult to compute, it is
approximated by one or a few iterations of a domain decomposition
method.

For the domain decomposition, let
$\{ U^{(i)} \,:\, i = 1,\ldots, M_{\rm dd} \}$ be a non-overlapping
decomposition of the mesh, i.e. the overlaps are of codimension at
least one, and define subdomains $\Omega^{(i)}$ by adding one
layer of neighboring cells (squares of the underlying mesh) to the $U^{(i)}$.
Here neighboring means sharing an edge or vertex.
On the subdomains $\Omega^{(i)}$, finite-element Helmholtz matrices
$A^{(i)}$, are defined by using the bilinear form
$a(u,v; \Omega, \Gamma_{\rm abs})$ of (\ref{eq:bilin_form}) with
domain $\Omega^{(i)}$, using absorbing boundary conditions on the
internal boundaries and inherited boundary conditions from parts of
the boundary that were also part of the boundary of the original
domain.

A one-level additive-Schwarz method is now defined as follows. Let $u$
be the current approximate solution.
On each $\Omega^{(i)}$ an approximation of a solution $w^{(i)}$
on $\Omega^{(i)}$ can be obtained by solving
\[
  A^{(i)} w^{(i)} = (A^{(i)} R_i - R_i A ) u + R_i f .
\]
There are several existing methods to combine these into a  global
solution, see e.g.\ \cite{graham2017domain}, p.\ 2116. For
this work we use a variant where all dofs that are in only one of the $U^{(i)}$ are set by that
$w^{(i)}$, and all dofs that are in multiple $U^{(i)}$ are defined
to be the average. Note that this is different from both the AVE and the
RAS variants of \cite{graham2017domain}.
The new approximate solution is then (here $j$ is an index to the
dofs of the finite-element space)
\begin{equation} \label{eq:RAS_like_update}
  u^{({\rm next})}_j
  = \frac{1}{L_j} \sum_{i : j \in \text{Dofs}(U^{(i)} )} (R_i^T w^{(i)})_j 
\end{equation}
where $L_j$ denotes the number of subdomains $U^{(i)}$ such that
$j \in \text{Dofs}(U^{(i)} )$. Algorithm~\ref{alg:csdd} provides an
overview of the steps.

\begin{algorithm}
  \caption{Complex-shifted domain decomposition smoother\label{alg:csdd}}%
  \Procedure{CSDDSmoother($u, f$)}{
    $r \gets f - A u$ \;
    $v \gets 0$ \;
    \For{$i= 1, \ldots, n_{\rm dd}$}{
      \FuncCall{DDStep}{$v, r; A_{\rm s}$}  }
    $u \gets u + v$ }
  \Procedure{DDStep($u, f; A_{\rm s}$)}{
    \For {$i=1, \ldots, M_{\rm dd}$}{
      solve $w_i$ from $A_{\rm s}^{(i)} w_i = -(R_i A_{\rm s} - A_{\rm s}^{(i)} R_i) u + R_i f$}
    assign $u$ according to (\ref{eq:RAS_like_update})}
\end{algorithm}

\subsection{Coarse-grid correction using QSFEM\label{subsec:coarse-grid_correction}}

For the coarse-grid correction step of a finite-element two-grid
method, one needs a coarse finite-element space, a discretization on
this space
and a prolongation operator $I_{\rm P}$ that maps elements of the
coarse space to elements of the original, fine space. When QSFEM is
used as coarse-level discretization, the discretization is as described
in subsection~\ref{subsec:QSFEM_coarse}, and the choice of mesh is
straightforward. A square mesh is taken such that the number of
degrees of freedom per unit length is divided by two in each
direction. The fine mesh involves order $p$ elements on cells of size
$h$ (or on triangles obtained by splitting the squares), the coarse
mesh will therefore be square with spacing $\frac{2 h}{p}$.
The prolongation operator will be given by interpolation, i.e.\ if $u
\in V_{\rm c}$ we require that
\begin{equation}
  \begin{aligned}
  {}& \text{$I_{\rm P}  u$ is piecewise polynomial of degree $p/2$ on each fine
    mesh cell, and}
  \\
  {}& I_{\rm P} u (x_\alpha) = u(x_\alpha) \qquad \text{for all vertices $x_\alpha$
    of the coarse mesh} .
\end{aligned}
\end{equation}
\NewInRevision{%
  These prolongation operators are constructed numerically,
  following a standard
pattern in FEM interpolation, in which the vertex degrees of freedom
are determined first, then de edge degrees of freedom and then the
cell degrees of freedom.  Let $u_{\rm c}$ be some coarse function
and $u = P u_{\rm c}$ its prolongation. First the degrees of freedom
of $u$ for each fine mesh vertex are determined, from the values of
$u_{\rm c}$ at these vertices, which are also vertices of the coarse
mesh. Denote the intermediate result by $u^{({\rm V})}$.  Then the
degrees of freedom of $u$ for each edge are determined. The lowest
$p/2-1$ edge basis functions in the hierarchical basis have nonzero
coefficients. These are determined by interpolating the function
$u_{\rm c} - u^{({\rm V})}$ at the $p/2-1$ coarse mesh vertices that
lie on the interior of the edge. This gives a second intermediate
result for $u$, denoted $u^{({\rm VE})}$. Then the degrees of freedom
corresponding to the interior of each cell are determined.  E.g.\ for
the square mesh the lowest $(p/2-1)^2$ of the hierarchical face basis
functions have nonzero coefficients, which are determined by
interpolating $u_{\rm c} - u^{({\rm VE})}$ through the $(p/2-1)^2$
vertex points of the coarse mesh that lie in the interior of the
cell. Now all degrees of freedom of $u$ are determined.} This concludes
the specification of the coarse-grid correction in case the QSFEM
discretization is used.

\subsection{Standard Galerkin $p$-type coarsening}

For comparison, standard Galerkin $p$-type coarsening will also be
considered. In this case the coarse and fine spaces are based on the same mesh, but
the coarse space uses elements of lower order $p_{\rm c} = p/2$ compared to the fine space. The matrix
$A_{\rm c}$ follows directly from this and the prolongation map is
simply the identity map since $V_{\rm c} \subset V$. 

In case of Galerkin $p$-type coarsening, convergence often improved
when a small complex shift was included in the coarse-level discrete
Helmholtz operator, see the numerical results in
section~\ref{sec:numerical_results}.

\subsection{Parameters of the method\label{subsec:parameters_of_method}}

The method contains a number of parameters, which are listed here.
The parameters related to the domain decomposition are: the
size of the subdomains $l_{\rm dd}$, the subdomain overlap, the number of domain
decomposition iterations $n_{\rm dd}$. The subdomains $U^{(i)}$ will
be chosen as $d$-dimensional blocks with dimension $l_{\rm dd}$ in
each direction. Of course since they form a partition, some subdomains
may need to have slightly different size, i.e.\ one smaller. The
$\Omega^{(i)}$ are obtained by adding one layer of squares to the
$U^{(i)}$ so the overlap is 2 cells throughout.

Other parameters for the smoother are: The parameter $\alpha_{\rm s}$
for the complex shift, the number of pre- and postsmoother steps
$n_{\rm s}$ in a
multigrid cycle and the relaxation factor in the coarse-grid
correction $\omega_{\rm c}$. 

In case of Galerkin $p$-type coarsening, there is in addition a
parameter $\alpha_{\rm c}$ for the complex shift in the coarse-level operator.

\section{Local Fourier analysis: Relation between convergence and
  dispersion errors for a toy problem\label{sec:lfa_toy}}

In this section, local Fourier analysis is applied to a toy problem,
in casu a 1-D, finite-difference version of the above algorithm.  In
this case one can more or less read off the convergence factor from
simple analytical expressions involving the dispersion errors, the
number of points per wavelength and an artificially introduced damping
needed to keep the problem well-posed. Local Fourier analysis for
multigrid applied to finite-difference problems is described in
\cite{brandt1977multi, stuben1982multigrid,
  TrottenbergOosterleeSchueller2001}. We follow \cite{
  TrottenbergOosterleeSchueller2001}.

\subsection{Toy problem and two-grid method\label{subsec:damped_Helmholtz_for_LFA}}
For the local Fourier analysis, the following constant coefficient problem with
damping, i.e.\ nonzero $\epsilon$ is studied
\begin{equation}
  - \frac{d^2 u}{dx^2} - (k^2 + i \epsilon) u = f(x), \qquad x \in \RR .
\end{equation}
Damping is required, because the operator $-\frac{d^2 u}{dx^2} - k^2$
is not invertible. Since, in practice, one is interested in waves
propagating over large but finite distances, a small
amount of damping is not a problem.
The damping constant $\epsilon$ is given in terms of a dimensionless
constant $D$ by
\begin{equation} \label{eq:epsilon_ksquare_D}
  \epsilon = \frac{k^2 D}{\pi} .
\end{equation}
The fundamental solution of this equation is given by
\begin{equation}
  \frac{i e^{i \kappa |x|}}{2 \kappa} 
  \qquad \text{with} \qquad
  \kappa  = \sqrt{k^2 + i \epsilon}.
\end{equation}
Since $\kappa = k \sqrt{1 + \frac{i D}{\pi}}
  \approx k \left( 1 + i \frac{D}{2\pi} \right)$ for $D <<1$, 
the constant $D$ corresponds to the fraction of damping per wavelength.
E.g.\ if $D = 0.01$, then after 100 wavelengths, a wave amplitude is
reduced to $e^{-1}$ times the original amplitude.

In this section, $h_{\rm f}$ denotes the fine grid spacing. The coarse 
grid has spacing $2 h_{\rm f}$, and discrete Helmholtz operators
(both fine and coarse) are defined by 
\begin{equation}
  A u = D_{2,M} u - (k^2 + i \epsilon) u
\end{equation}
where $D_{2,M}$ is the standard central difference approximation of the second order
derivative of width $2M+1$ on the appropriate grid. By $A$ we denote
the fine and by $A_{\rm c}$ the coarse operator as previously.
The prolongation operator $I_{\rm P}$ will be given by linear interpolation and the
restriction $I_{\rm R}$ by full weighting
\cite{TrottenbergOosterleeSchueller2001}. For the smoother it is
assumed that  the
complex-shifted Helmholtz operator
$A_{\rm s} = A - i \alpha_{\rm s} k^2 I$ is inverted exactly, so no
domain decomposition since that would make the local Fourier analysis rather complicated.

\subsection{Local Fourier analysis\label{subsec:lfa_toy_methods}}

The smoother, the coarse-grid correction and the
two-grid cycle act linearly on the error $u - A^{-1} f$. The
corresponding operators will be denoted by $S$, $K$ and $M$ as in
\cite{TrottenbergOosterleeSchueller2001}. The point is to compute 
the asymptotic convergence rate $\rho(M)$.

Define the spaces
\begin{equation}
  \begin{aligned}
  E_{{\rm f},\theta}
  = {}& \operatorname{span} \left(
  e^{i \theta x / h_{\rm f}}, e^{i (\theta + \pi) x / h_{\rm f}}
\right) \subset V, \qquad \text{and}
  \\
    E_{{\rm c},\theta}
  = {}& \operatorname{span} \left(
  e^{i \theta x / h_{\rm f}} \right)  \subset V_{\rm c} .
\end{aligned}
\end{equation}
\NewInRevision{%
Here $\theta$ denotes a wave number scaled by $h$.}
The operators $A, A_{\rm s}, A_{\rm c}, I_{\rm P}, I_{\rm R}, S, K, M$ are of
course defined as maps
from one of $\{V, V_{\rm c} \}$ to one of $\{V, V_{\rm c} \}$.
By translation invariance, they also map one of $\{ E_{{\rm f},\theta}, E_{{\rm c},\theta} \}$
to one of $\{ E_{{\rm f},\theta}, E_{{\rm c},\theta} \}$. The matrices
for these maps will be called symbols and denoted $\widehat{A}(\theta)$,
$\widehat{A}_{\rm s}(\theta)$, etc. 

The derivation of these symbols is standard
\cite{TrottenbergOosterleeSchueller2001}.
The symbols $\widehat{A}(\theta)$,
$\widehat{A}_{\rm s}(\theta)$ are diagonal $2 \times 2$ matrices,
where the diagonal entries are the regular symbols (cf.\
(\ref{eq:symbol_sigma_P})), denoted here by $\widetilde{A}(\theta)$,
$\widetilde{A}_{\rm s}(\theta)$.
\NewInRevision{%
For example, for second order finite differences, the symbol is
\begin{equation}
  \widetilde{A}(\theta)
  = h^{-2} ( 2 - 2 \cos(\theta) ) - (k^2+i \epsilon)
  = h^{-2} 4 \sin(\theta/2)^2 - (k^2+i \epsilon) .
\end{equation}
}
The restriction and prolongation maps have symbols
\begin{equation}
  \begin{aligned}
    \widehat{I}_{\rm R}(\theta) = {}& 
    \begin{bmatrix} \frac{1}{2} + \frac{1}{2} \cos(\theta)
      \, & \,
      \frac{1}{2} - \frac{1}{2} \cos(\theta) 
    \end{bmatrix} ,
           \qquad \text{and}
           \\
  \widehat{I}_{\rm P}(\theta) = {}& \begin{bmatrix} \frac{1}{2} + \frac{1}{2} \cos(\theta) \\
    \frac{1}{2} - \frac{1}{2} \cos(\theta) 
  \end{bmatrix}
  \end{aligned}
\end{equation}
cf.\ \cite{TrottenbergOosterleeSchueller2001}.
The symbol of $S$ is given by
\begin{equation}
  \begin{aligned}
  \widehat{S} = {}& \operatorname{diag} [\widetilde{S}(\theta),
  \widetilde{S}(\theta+\pi) ] , \qquad \text{with}
  \\
  \widetilde{S}(\theta) = {}& 1 - \widetilde{A}_{\rm s}^{-1} \widetilde{A}(\theta)
\end{aligned}
\end{equation}
The matrix $\widehat{K}(\theta)$ satisfies
\begin{equation}
  \widehat{K}(\theta) = I - \widehat{I}_{\rm P}(\theta) \widehat{A}_{\rm c}^{-1}(\xi)  \widehat{I}_{\rm R}(\theta)
  \begin{bmatrix}
    \widetilde{A}(\theta) & 0 \\
    0 & \widetilde{A}(\theta + \pi) 
  \end{bmatrix} .
\end{equation}
while $\widehat{M}(\theta)$ is given by
\begin{equation}
  \widehat{M}(\theta) =
  \widehat{S}(\theta)^{\nu_2} \widehat{K}(\theta) \widehat{S}(\theta)^{\nu_1} .
\end{equation}
The asymptotic convergence rate of the two-grid method is
\begin{equation}
  \rho  = \sup_{\theta \in [-\frac{\pi}{2}, \frac{\pi}{2} )}
  \rho(\widehat{M}(\theta)) .
\end{equation}

\subsection{Dispersion errors}

Dispersion errors are defined with reference to the operator
$-\frac{d^2}{dx^2} - k^2 = 0$, i.e.\ with $\epsilon = 0$. In the kernel
of this operator are plane waves $e^{i\xi x}$ with $\xi = \pm k$.  In
general the discrete operators also have a kernel with plane wave
solutions on a grid. If $e^{i \xi_f x}$ is such a plane wave, the
(relative) dispersion error is defined by
\begin{equation}
  \text{(relative) dispersion error} = \frac{\xi_{\rm f}}{k} - 1 .
\end{equation}

In this section, it is convenient to work with scaled (normalized)
wave numbers $h \xi$, where $\xi$ is a regular wave number.
So $\zeta_{\rm f}$ and $\zeta_{\rm c}$ will denote
normalized wave numbers for propagating plane waves for the fine and
coarse operators, i.e. $\zeta_{\rm f} = h \xi_{\rm f}$. For the two-grid method,
the relative difference between $\zeta_{\rm f}$ and $\zeta_{\rm c}$ is
important for the convergence. This quantity will be denoted by 
\begin{equation} \label{eq:define_delta_disp_err}
  \delta = \frac{ \zeta_{\rm c} - \zeta_{\rm f} }{kh} ,
\end{equation}
which is more or less the relative difference between $\zeta_{\rm f}$
and $\zeta_{\rm c}$, since for a discretization to make sense the
approximate identity $\zeta_{\rm f} \approx kh$ must hold.

\subsection{Results for the toy example}

The main conclusion from this section is that the asymptotic
convergence rate approximately equals the ratio
\begin{equation}
  R := \frac{2 \pi | \delta |}{ D } .
\end{equation}
where $\delta$ and $D$ are the constants from
(\ref{eq:define_delta_disp_err}) and (\ref{eq:epsilon_ksquare_D}).

For second order and fourth order finite differences, the convergence
rates $\rho$ and the ratio $R$ are plotted in
Figure~\ref{fig:toy_example_1}. Here parameters were
$\alpha_{\rm s} = 0.2$ and $D = 0.01$, so 1 \% damping per wavelength. 
\begin{figure}
  \begin{center}
    \includegraphics[width=7cm]{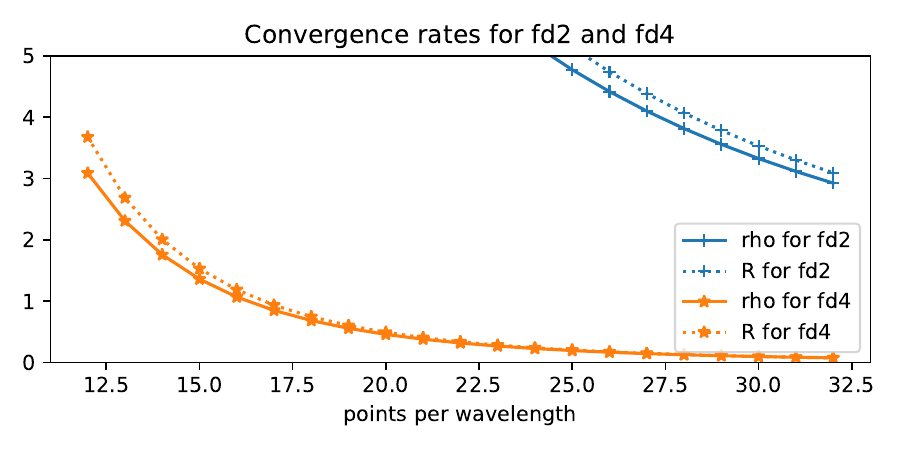}
  \end{center}
  \caption{$\rho$ and $R$ as a function of the number of points per
    wavelength. The example shows that $\rho$ is quite close to $R$.}\label{fig:toy_example_1}
\end{figure}
Note that $\rho$ and $R$ are indeed close to each other.

For a given $D$, significant wave energy propagates over distances
on the order of $1/D$ wavelengths. In such a case, the constant $\delta$ should
be made much smaller than $\frac{2\pi}{D}$ to have good convergence.
The hope is that this extends to bounded problems with absorbing
boundaries. For example in a constant coefficient case with absorbing
boundary conditions, waves can
propagate over at most distance $L := \operatorname{diam}(\Omega)$. Then
$\delta$ should be smaller than $\frac{4\pi^2}{k L}$. A full study of such
conjectures is outside the scope of this paper.

\subsection{An argument that $\rho \approx  R = \frac{2 \pi | \delta |}{ D }$}

We discuss why it can be expected that $\rho \approx R = \frac{2 \pi | \delta |}{ D }$.
This is done in two steps.
First, we will argue that in typical situations, for this 1-D example, the asymptotic
convergence rate $\rho$ is quite well approximated by
$\sup_{\theta \in [-\pi/2, \pi/2)}| \widehat{K}_{1,1}(\theta) |$. Then
we will argue that the quantity
$\sup_{\theta \in [-\pi/2, \pi/2)}| \widehat{K}_{1,1}(\theta) |$ in turn can be related
to the ratio
\begin{equation} \label{eq:define_R}
  R := \frac{2 \pi | \delta |}{ D } .
\end{equation}
Together this results in 
\begin{equation}
  \rho \approx \sup_{\theta \in [-\pi/2, \pi/2)}| \widehat{K}_{1,1}(\theta) |
  \approx R .
\end{equation}

Assuming $A$ and $A_{\rm c}$ are approximations of the true operator
$-\frac{d^2}{dx^2} - (k^2 + i \epsilon)$,
the symbols $\widehat{A}$ and $\widehat{A}_{\rm c}$
can be approximated around $\zeta_{\rm f}$ resp.\ $\zeta_{\rm c}$ by
\begin{equation}
  \widetilde{A} = 
\NewInRevision{ 
 \frac{d \Re \widetilde{A}}{dx}(\zeta_f) (\theta - \zeta_{\rm f})
  + O( (\theta - \zeta_{\rm f})^2 ) }
  \approx \frac{2k}{h} (\theta - \zeta_{\rm f}) - i
  \epsilon , 
\end{equation}
and
\begin{equation}
  \widehat{A}_{\rm c} =
\NewInRevision{ 
  \frac{d \Re \widehat{A}_{\rm c}}{dx}(\zeta_f) (\theta - \zeta_{\rm c})
  + O( (\theta - \zeta_{\rm c})^2 ) }
  \approx \frac{2k}{h} (\theta - \zeta_{\rm c}) -
  i \epsilon .
\end{equation}
\NewInRevision{Here it is used that the derivative
$\frac{d \Re \widetilde{A}}{dx}(\zeta_f)$ is approximately equal to
the derivative of the true symbol at $\theta = kh$  which is given by}
\[  
  {\color{red}
  \left. \frac{d}{d\theta} h^{-2} ( \theta^2 - k^2 h^2 ) \right|_{\theta = kh}
  = \left. 2 h^{-2} \theta \right|_{\theta = kh} = 2 h^{-1} k }
\]
\NewInRevision{and similarly for $\widehat{A}_{\rm c}(\theta)$.}
Therefore 
\begin{equation} \label{eq:K11hat_1}
  \widehat{K}_{1,1}(\theta) = 1 - \frac{1}{4} (1+ \cos \theta)^2 \widehat{A}_{\rm c}(\theta)^{-1}
  \widetilde{A}(\theta)
  \approx 1 - c \frac
  { \frac{2k}{h} ( \theta - \zeta_{\rm f} ) - i \epsilon }
  { \frac{2k}{h} ( \theta - \zeta_{\rm c} ) - i \epsilon }
  = 
  1 - c \frac{  \theta - \zeta_{\rm f}  - i \tilde{\epsilon} }
    {  \theta - \zeta_{\rm c}  - i \tilde{\epsilon} } ,
\end{equation}
where $c = \frac{1}{2} (1+ \cos (kh) )^2$ and
$\tilde{\epsilon} = \frac{\epsilon h}{2k}$ and in the last equality 
numerator and denominator were divided by $\frac{2k}{h}$.

The factor $c = \frac{1}{2} (1+ \cos (kh) )^2$ says something about the
prolongation and restriction. If the plane waves $e^{i x k}$ are
finely sampled (the number of points per wavelength is large) than $c$
is near one. Some values are $c \approx 0.56$ for 6 points per
wavelength (at the fine level), $c \approx 0.73$ for 8 points per
wavelength $c \approx 0.87$ for 12 points per wavelength.

It follows that $\widehat{K}_{1,1}(\theta)$ is near zero (good
convergence) if
$\zeta_{\rm f} = \zeta_{\rm c}$ and $c$ is near one. On the other
hand, if $|\tilde{\epsilon}| \ll | \zeta_{\rm c} - \zeta_{\rm f} |$,
then there are two problems. First it follows that
$| \widehat{K}_{1,1}(\theta) | \gg 1$ (divergence) for $\theta = \zeta_{\rm c}$,
because of division by a near-zero number. Secondly for $\theta =
\zeta_{\rm f}$ it follows that $\widehat{K}_{1,1}(\theta) \approx 1$
(very slow convergence at best). With equation (\ref{eq:K11hat_1}) one can
therefore intuitively understand the convergence properties. The maximum of
$| \widehat{K}_{1,1}(\theta) |$ can be determined:

\begin{lemma}
  Let
  \[
    f(\theta) =
    1 - c \frac{  \theta - \zeta_{\rm f}  - i \tilde{\epsilon} }
    {  \theta - \zeta_{\rm c}  - i \tilde{\epsilon} }    
  \]
  then, 
  \begin{equation} \label{eq:max_K11}
    \max_{\theta \in \RR}| f(\theta) |
    =
    \sqrt{ (1-c)^2 + \left( \frac{ | c R |}{2} \right)^2}
    +  \frac{ c R }{ 2 } .
  \end{equation}
\end{lemma}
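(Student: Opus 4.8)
The plan is to recognize $f$ as the image of a straight line under a Möbius transformation, so that the range of $f(\theta)$, $\theta\in\RR$, is a circle whose maximal modulus can be read off geometrically.

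First I would simplify $f$ algebraically. Writing $w = \theta - \zeta_{\rm c} - i\tilde{\epsilon}$ and $g = \zeta_{\rm c} - \zeta_{\rm f}$, the numerator $\theta - \zeta_{\rm f} - i\tilde{\epsilon}$ becomes $w + g$, so that
\[
  f(\theta) = 1 - c\,\frac{w + g}{w} = (1-c) - \frac{c\,g}{w}.
\]
As $\theta$ runs over $\RR$, the variable $w$ runs over the horizontal line $\Im(w) = -\tilde{\epsilon}$, and I would track the image of this line under the two remaining operations: the inversion $w\mapsto 1/w$ and the affine map $z \mapsto (1-c) - c\,g\,z$ (in which $c$ and $g$ are real).

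The key step is the inversion. Since the line $\Im(w) = -\tilde{\epsilon}$ does not pass through the origin, its image under $w\mapsto 1/w$ is a circle through the origin; a direct computation, setting $w = x - i\tilde{\epsilon}$ and separating real and imaginary parts of $1/w = \bar{w}/|w|^2$, gives the circle centered at $i/(2\tilde{\epsilon})$ with radius $1/(2\tilde{\epsilon})$. Applying the affine map, which sends circles to circles (here a scaling by the real factor $-c\,g$ followed by a real shift by $1-c$), the range of $f$ is the circle $\Gamma$ centered at $(1-c) - i\,\frac{c\,g}{2\tilde{\epsilon}}$ with radius $\frac{|c\,g|}{2\tilde{\epsilon}}$. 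Because the maximal modulus attained on any circle equals the distance from the origin to its center plus its radius, I obtain
\[
  \max_{\theta\in\RR}|f(\theta)|
  = \sqrt{(1-c)^2 + \left(\frac{c\,g}{2\tilde{\epsilon}}\right)^2}
    + \frac{|c\,g|}{2\tilde{\epsilon}}.
\]

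To finish, I would identify $\frac{|c\,g|}{2\tilde{\epsilon}}$ with $\frac{c\,R}{2}$. Using $\delta = g/(kh)$, $\tilde{\epsilon} = \epsilon h/(2k)$ and $\epsilon = k^2 D/\pi$ one checks that $\tilde{\epsilon} = khD/(2\pi)$, whence $|g|/\tilde{\epsilon} = 2\pi|\delta|/D = R$; since $c \ge 0$, this gives $\frac{|c\,g|}{2\tilde{\epsilon}} = \frac{c\,R}{2}$ and the claimed expression~(\ref{eq:max_K11}) follows. The only point needing a little care is that the supremum is an actual maximum attained at a finite $\theta$: the single point of $\Gamma$ missing from the range of $f$ is the image $1-c$ of $\theta = \pm\infty$, and the farthest point of $\Gamma$ from the origin coincides with $1-c$ only in the degenerate case $g = 0$ (where $f \equiv 1-c$ and the formula holds trivially); otherwise the maximum is attained. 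This attainment check, together with bookkeeping of which Möbius image corresponds to which circle, is the only real obstacle, the rest being the standard fact that inversion maps a line to a circle.
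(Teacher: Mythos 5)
Your proof is correct and follows essentially the same route as the paper's: both reduce $f$ to the form $(1-c)+c\,\dfrac{\zeta_{\rm f}-\zeta_{\rm c}}{\tilde{\theta}-i\tilde{\epsilon}}$, observe that the image of the real line is a circle with center $(1-c)+i\,\dfrac{c(\zeta_{\rm f}-\zeta_{\rm c})}{2\tilde{\epsilon}}$ and radius $\dfrac{c|\zeta_{\rm f}-\zeta_{\rm c}|}{2\tilde{\epsilon}}$, take distance-to-center plus radius, and identify $|\zeta_{\rm f}-\zeta_{\rm c}|/\tilde{\epsilon}=R$. The only differences are cosmetic: you make the M\"obius-transformation structure and the attainment of the maximum (the excluded point $1-c$ at $\theta=\pm\infty$ is not the extremal point unless $\zeta_{\rm f}=\zeta_{\rm c}$) explicit, details the paper leaves implicit.
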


\begin{proof}
  We let $\tilde{\theta} = \theta - \zeta_{\rm c}$. Then
  $f(\theta)$ can be rewritten as
  \begin{equation}
    f(\theta) 
  =  1 - c \frac{  \tilde{\theta} - \zeta_{\rm f} + \zeta_{\rm c}  - i \tilde{\epsilon} }
    {  \tilde{\theta}  - i \tilde{\epsilon} }
       = 1 - c + c \frac{ \zeta_{\rm f} - \zeta_{\rm c}
    }{\tilde{\theta} - i \tilde{\epsilon}} .
  \end{equation}
  When $\tilde{\theta}$ varies in $\RR$, then $f(\theta)$ maps out a circle in the
complex plane with center
$(1-c) + i \frac{ c(\zeta_{\rm f} - \zeta_{\rm c}) }{2\tilde{\epsilon}}$
and radius
$\frac{ c(|\zeta_{\rm f} - \zeta_{\rm c}|) }{2\tilde{\epsilon}}$.
The maximum absolute value of the points on the circle is
\begin{equation}
  \sqrt{ (1-c)^2 + \left( \frac{ c(\zeta_{\rm f} - \zeta_{\rm c})
      }{2\tilde{\epsilon}} \right)^2} +
  \frac{ c | \zeta_{\rm f} - \zeta_{\rm c} |  }{2\tilde{\epsilon}}
\end{equation}
The factor $\frac{|\zeta_{\rm f} - \zeta_{\rm c}| }{\tilde{\epsilon}}$ can
be rewritten as
\begin{equation}
  \frac{|\zeta_{\rm f} - \zeta_{\rm c}|}{\tilde{\epsilon}}
  = \frac{| \delta | h k}{\frac{\epsilon h}{2k}}
  = \frac{2 | \delta | k^2}{ \epsilon} = \frac{ 2 \pi | \delta |}{D}
  = R
\end{equation}
This leads to the result.
\end{proof}

We next discuss the relations of $\rho$ to $\sup_{\theta \in [-\pi/2, \pi/2)}
|\widehat{K}_{1,1}(\theta)|$ and to the ratio $R$ defined above.
The smoother symbol $\widetilde{S}(\theta)$ satisfies
$\left| \widetilde{S}(\theta) \right| \le 1$ in general. It satisfies
\begin{equation} \label{eq:S_near_1}
  \widetilde{S}(\theta) \approx 1 
\end{equation}
when $\theta \approx \pm k h$ and
\begin{equation} \label{eq:small_S_region}
  \left| \widetilde{S}(\theta) \right| \ll 1
\end{equation}
when $\left|\frac{|\theta|}{h}\right|^2 - k^2$ is large compared to
$\epsilon_{\rm s}$, in particular for $\theta \in [\pi/2, 3\pi/2)$.
As we saw, 
\begin{equation}
  \widehat{M}(\theta)
  =
  \begin{bmatrix} \widetilde{S}(\theta) & 0 \\
    0 & \widetilde{S}(\theta+\pi) 
  \end{bmatrix}^{\nu_2}
  \begin{bmatrix}
    \widehat{K}_{1,1}(\theta) & 
    \widehat{K}_{1,2}(\theta) \\
    \widehat{K}_{2,1}(\theta) & 
    \widehat{K}_{2,2}(\theta)
  \end{bmatrix}
  \begin{bmatrix} \widetilde{S}(\theta) & 0 \\
    0 & \widetilde{S}(\theta+\pi) 
  \end{bmatrix}^\nu_1
\end{equation}
By sufficiently many smoother
applications (in the examples $\nu_1 = \nu_2 =1$ often suffices),
$\widehat{M}_{1,1}(\theta)$ is the
dominant component, since (\ref{eq:small_S_region}) implies that other
components can be made small,
while (\ref{eq:S_near_1}) implies that
\begin{equation}
  \widehat{M}_{1,1}(\theta) \approx \widehat{K}_{1,1}(\theta)
\end{equation}
for $\theta \approx kh$.

We next relate this to $R$.
For $c = 1$, we get from the lemma
\[
  \max_{\theta \in [-\pi/2, \pi/2)} \widehat{K}_{1,1}(\theta) \approx R
\]
When $c$ varies from $1$ to $0$, then $\max_{\theta \in \RR}|
f(\theta) |$ varies between $R$ and 1. In general $\max_{\theta \in \RR}|
f(\theta) |$ is between roughly 1 and $R$. 
We already saw that, in our example $c$ is smaller but near 1, with
the difference being larger for more points per wavelength.
This explains that in this toy example, $\rho$ is quite close to $R$.

\section{Local Fourier analysis of the finite-element two-grid method\label{sec:lfa_full_method}}

Local Fourier analysis applied to finite-element problems is different from
that for finite-difference problems which was applied in the
previous section.  The main difference is that it uses Bloch waves
\cite{bloch1929quantenmechanik, kittel2018introduction} instead of the
Fourier modes. Bloch waves are natural and well known in PDE theory
and finite-element analysis
\cite{odeh1964partial,ainsworth2004discrete}, however, in the local
Fourier analysis community it appears the concept is not referred to,
even though related technology has been developed \cite{hemker2003two}.  In
the next section we will therefore describe in some detail how
local Fourier analysis is done using Bloch waves.

For the local Fourier analysis the Helmholtz problem on $\RR^d$
\begin{equation} \label{eq:Helmholtz_lfa_section}
  - \Delta u - \left( k^2 + i \epsilon \right) u = f(x)
\qquad\qquad \text{in $\RR^d$} ,
\end{equation}
with constant $k$ and $\epsilon$ is considered.
It is again assumed that
$\epsilon = \frac{D k^2}{\pi}$ where $D>0$ is the damping per
wavelength as explained below (\ref{eq:epsilon_ksquare_D}).
In the numerical LFA results, we chose $D = 0.01$ which roughly implies that 
waves propagate over on the order of 100 wavelengths in the problem to
be solved.
For the smoother it is again assumed that the complex-shifted Helmholtz
operator is inverted exactly.

The numerical LFA results at the end lead to several conclusions.
First, convergence is much better when optimized finite differences
are used at the coarse level, than when standard Galerkin
$p$-coarsening is used. Secondly, for each order the results show an
approximate lower bound for the number of degrees of freedom per
wavelength such that the method is still convergent. In
Figure~\ref{fig:rho_lfa_mgpar}(b) for example, one sees that for order
6, there is convergence for $\ge 7$ dofs per wavelength. For Galerkin
$p$-coarsening it can again be observed that $\rho \approx R$ (cf.\
section~\ref{subsec:lfa_toy_methods}, while for finite difference
coarse-level discretization the ratio $\frac{\rho}{R}$ is clearly
larger than 1. Also, in some cases parameters were identified that
provided some improvement compared to the standard choice mentioned in
section~\ref{sec:two-grid-method}.

\subsection{Local Fourier analysis using Bloch waves}

It was noted in subsection \ref{subsec:lfa_toy_methods}, cf.\
\cite{TrottenbergOosterleeSchueller2001} that the smoother, the
coarse-grid correction and the full two-grid cycle act as linear operators on
the residual. The action of these operators, denoted $S, K$ and
$M$, could be described by low-dimensional matrices. These facts
remain true, however, here this is done using a Bloch wave basis, and
not using a Fourier basis. This will be explained next.

Bloch wave computations are associated with translation symmetry over
a lattice of translations. In this case the lattice is $(h \ZZ)^d$. By
assumption, the mesh and the finite-element space are invariant under
such translations.

It is convenient if the finite-element basis is compatible with these translations.
A {\em unit cell} will mean a set of mesh entities, i.e.\
vertices, edges and faces in 2-D, such that translates of the unit
cell are disjoint and their union is the entire mesh. Here a
unit cell will consist of the mesh entities in a square of size $h$, such that the
left and lower edges, and the lower left corner are part of the unit
cell, while the right and top edges, and the other corner points, are
part of neighboring unit cells. Let $\phi_{0,i}$
denote the basis functions associated with the mesh entities from the
unit cell ($0$ is here an
element in $\ZZ^d$). It is assumed that the other basis functions are
given by the translates
\begin{equation}
  \phi_{\alpha,i} = T_{\alpha h} \phi_{0,i} , \qquad \alpha \in \ZZ^d
\end{equation}
where $T_s$ denotes translation over a vector $s \in \RR^d$, i.e.
\begin{equation}
  T_s u(x) = u (x-s) .
\end{equation}
Since the translates of the unit cell form a partition of the mesh, a
full basis is obtained in this way. For an element $L$ of $V'$, the
translates are defined by duality
\begin{equation}
  T_s L(v) = L(T_{-s} v) .
\end{equation}

Given a vector $\theta$,
a Bloch mode with (normalized) wave vector $\theta$ is a function $u$ such that
\begin{equation} \label{eq:bloch_mode_by_translation}
  u(x+s) = e^{i \theta \cdot s / h} u(x) ,
  \qquad
  \text{for all $s \in (h \ZZ)^d$.}
\end{equation}
This can also be written as 
\begin{equation} \label{eq:bloch_mode_by_translation2}
  T_s u = e^{-i \theta \cdot s / h} u ,
  \qquad
  \text{for all $s \in (h \ZZ)^d$} ,
\end{equation}
which is also a valid definition for elements of $V'$.
Alternatively it can be characterized as a product 
$u(x) = e^{i \theta \cdot x / h} v(x)$
where $v$ is periodic over the lattice.
It follows that the coefficients $u_{\alpha,j}$ of a Bloch vector
$u = \sum_{\alpha,j} u_{\alpha,j} \phi_{\alpha,j}$ satisfy
\begin{equation}
  u_{\alpha, j} = e^{i \theta \cdot \alpha} u_{0,j} .
\end{equation}

Spaces of Bloch waves are defined next:
\begin{definition}
Let $W$ be an element of $\{ V, V', V_{\rm c}, V_{\rm c}' \}$. 
The subspace of Bloch modes in $W$ with wave
vector $\theta$ will be denoted by $E(W, \theta)$.
\end{definition}

An operator will be called invariant under a set of translations
if it commutes with these translations.
A bilinear form will be called invariant under translations in the
lattice if 
\begin{equation}
  a(T_s u, T_s v) = a(u,v) , \qquad s \in \left( h \ZZ \right)^d .
\end{equation}
The operator $V \to V'$ derived from this bilinear form is then
automatically invariant under translations.  The bilinear form
(\ref{eq:Helmholtz_lfa_section}) is translation invariant since $k^2$
and $\epsilon$ are assumed constant (in fact $h$-periodicity would be
enough).

If $P$ is an operator $W_1 \to W_2$, with both
$W_j$ in $\{ V, V', V_{\rm c}, V_{\rm c}' \}$, then $P_{\alpha,\beta}$
will denote the matrix blocks. If $P$ is translation invariant, then
\begin{equation}
  P_{\alpha,\beta} = 
  P_{\alpha+\gamma,\beta+\gamma} .
\end{equation}
If $W_1$ and $W_2$ have the same number of degrees of freedom in the unit
cell, such a matrix is called a $d$-level block Toeplitz matrix.

A key result for local Fourier analysis is
\begin{theorem}
  Let $P : W_1 \to W_2$ be lattice translationally invariant, then
  $P$ maps $E(W_1,\theta)$ into $E(W_2,\theta)$.
\end{theorem}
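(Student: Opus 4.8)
The statement to prove is that a lattice translationally invariant operator $P : W_1 \to W_2$ maps $E(W_1, \theta)$ into $E(W_2, \theta)$. Here $E(W, \theta)$ is the space of Bloch modes with wave vector $\theta$, characterized (for both $V$ and its dual $V'$) by the condition $T_s u = e^{-i \theta \cdot s / h} u$ for all $s \in (h\ZZ)^d$, as in (\ref{eq:bloch_mode_by_translation2}). Translation invariance of $P$ means, by definition, that $P$ commutes with every lattice translation: $P \, T_s = T_s \, P$ for all $s \in (h\ZZ)^d$. The plan is simply to show that $Pu$ inherits the defining Bloch eigenvalue relation from $u$.

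\textbf{Main argument.} Let $u \in E(W_1, \theta)$, so that $T_s u = e^{-i \theta \cdot s / h} u$ for every $s \in (h\ZZ)^d$. I want to verify that $Pu$ satisfies the same relation in $W_2$. Using the commutation property and then the eigenvalue relation for $u$, compute
\begin{equation}
  T_s (Pu) = P (T_s u) = P \left( e^{-i \theta \cdot s / h} u \right)
  = e^{-i \theta \cdot s / h} (Pu) ,
\end{equation}
where the last equality uses linearity of $P$ together with the fact that $e^{-i \theta \cdot s / h}$ is a scalar. This holds for all $s \in (h\ZZ)^d$, which is exactly the defining condition (\ref{eq:bloch_mode_by_translation2}) for $Pu$ to lie in $E(W_2, \theta)$. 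Hence $P$ maps $E(W_1, \theta)$ into $E(W_2, \theta)$, as claimed.

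\textbf{Where care is needed.} The computation above is essentially one line, so the substance of the proof is not in the algebra but in making sure the commutation $P T_s = T_s P$ is legitimately available and that the translation action is consistently defined on whichever of the four spaces $\{V, V', V_{\rm c}, V_{\rm c}'\}$ are involved. For the primal spaces $V, V_{\rm c}$ the translation $T_s$ is the geometric shift $T_s u(x) = u(x-s)$; for the dual spaces it is the adjoint action $T_s L(v) = L(T_{-s} v)$ defined earlier. The one point worth stating explicitly is that translation invariance of $P$ is either assumed directly (for operators coming from a translation-invariant bilinear form, which the excerpt already establishes for (\ref{eq:Helmholtz_lfa_section})) or must be checked for the interpolation and other discrete operators; once it is granted, the eigenvalue relation transfers verbatim. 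I expect the only genuine obstacle to be purely bookkeeping: confirming that $T_s$ acts the same way (same sign convention in the exponent, same lattice $(h\ZZ)^d$) on source and target, so that the scalar $e^{-i\theta\cdot s/h}$ pulled through $P$ is indeed the defining eigenvalue on the target side. No further estimates or limiting arguments are required.
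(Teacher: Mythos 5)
Your proof is correct and is essentially identical to the paper's own one-line argument: apply the commutation $T_s P = P T_s$, then the Bloch eigenvalue relation for $u$, then linearity of $P$. The additional remarks on sign conventions and the dual-space action of $T_s$ are sensible bookkeeping but do not change the argument.
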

\begin{proof}
  Suppose $T_s u = e^{-i \theta \cdot s/h} u$ for $s \in (h \ZZ)^d$
  \[
    T_s P u = P T_s u = e^{-i \theta \cdot s/h} P u ,
  \]
  so $Pu \in E(W_2, \theta)$.
\end{proof}

If $\phi_{\alpha, j}$ is the standard basis for a space then $b_j$,
$j = 1, \ldots, N_{\rm W}$ given by
\begin{equation} \label{eq:basis_E_W_theta}
  b_j = \sum_{\alpha \in \ZZ^d} e^{i\theta \cdot \alpha} \phi_{\alpha, j} .
\end{equation}
form a basis for $E(W, \theta)$. 

\begin{definition}
  Let $W_1$ and $W_2$ be spaces as above and $P : W_1 \to W_2$ be translation
  invariant. The symbol $\sigma(P)(\xi)$
  is defined to be the matrix of the map
  $P : E(W_1,\xi) \to E(W,\xi)$ with respect to the bases
  (\ref{eq:basis_E_W_theta})
  for $W_1$ and $W_2$.
  The symbol will also be denoted by $\widehat{P}(\theta)$.
\end{definition}

If $P : W \to W$ with $W$ an element of $\{ V, V', V_{\rm c}, V_{\rm
  c}' \}$, then
\begin{equation}
  \rho(P) = \sup_{\theta \in [-\pi,\pi)^d} \rho(\widehat{P}) .
\end{equation}
This will be used to approximate the asymptotic convergence rate of
the two-grid method.

The entries of $\sigma(P)$ satisfy
\begin{equation}
  \sigma(P)(\theta)_{j,k}
  = \left( P b_k(\theta) \right)_{0,j}
  = \sum_\beta e^{i \theta \cdot \beta} ( P_{0,\beta} )_{j,k} 
\end{equation}
so that a formula to compute $\sigma(P)$ is given by
\begin{equation} \label{eq:symbol_sum}
  \sigma(P)(\theta) = \sum_\beta e^{i \theta \cdot\beta} P_{0,\beta} .
\end{equation}

In this work operators are either compactly supported, in the sense that
$P_{\alpha,\beta} = 0$ if $\| \beta - \alpha \|$ is
larger than some constant or they are sums and 
products of such operators and their inverses. 
In case an operator is compactly supported , then
(\ref{eq:symbol_sum}) can be used directly  to
compute its symbol. Symbols of inverses of such operators can be computed by
using that
\begin{equation} \label{eq:symbol_of_inverse}
  \sigma(P^{-1}) = \sigma(P)^{-1} ,
\end{equation}
and symbols of sums or products of operators by taking the
corresponding sum or product of the symbols.

As noted in
section~\ref{subsec:lfa_toy_methods}, there are operators $S, K$ and
$M$ which correspond to the actions of the smoother, coarse-grid correction and the
two-grid iteration on the residual $f - A u$. In local Fourier
analysis one is interested in the asymptotic convergence factor given by
\begin{equation}
  \sup_{\theta \in [-\pi,\pi)^d} \rho(\widehat{M}(\theta)) .
\end{equation}
The symbol $\widehat{M}(\theta)$ is computed using the following formulas
\begin{align}
  \widehat{S}(\theta) = {}& I - \widehat{A}_{\rm s}(\theta)^{-1} \widehat{A}(\theta) 
  \\
  \widehat{K}(\theta) = {}& I - \widehat{I}_{\rm P}(\theta)
                            \widehat{A}_{\rm c}(\theta)^{-1}
                            \widehat{I}_{\rm R}(\theta)
                            \widehat{A}(\theta)
  \\
  \widehat{M}(\theta) = {}& \widehat{S}(\theta)^{n_{\rm s}}
  \widehat{K}(\theta) \widehat{S}(\theta)^{n_{\rm s}}
\end{align}
The operators $A, A_{\rm s}, A_{\rm c}, I_{\rm P}$ and $I_{\rm R}$ are
compactly supported so their symbols can be computed directly using
(\ref{eq:symbol_sum}).

\subsection{Dispersion relations for regular mesh finite elements\label{subsec:dispersion_fem}}

For the Helmholtz equation, the dispersion relation is the set of wave
numbers $\xi$ such that $e^{ix \cdot \xi}$ is a solution to
$-\Delta u - k^2 = 0$. It is the set  $\|\xi\| = k$. For finite elements,
the discrete dispersion relation is about the Bloch waves $u$ on an infinite
mesh such that $A u = 0$, where $A$ is the finite-element
matrix. These can be called propagating waves.
It can be determined by solving $\det \widehat{A}(\theta) = 0$, hence
by finding those $\theta$ such that $\widehat{A}(\theta)$ has at least
one zero eigenvalue.

To define a dispersion error, a wavenumber must be associated with
such a Bloch wave vector $\theta$. The dispersion error is then the
difference between this wave vector and the nearest `true' wave
vector. Because $\theta$ is really an equivalence class of wave
vectors (module a vector in $(2\pi \ZZ)^d$, see the appendix) there is
some ambiguity in the choice of such a wave vector. We choose the
member of the equivalence class that minimizes the dispersion error.
For a given mesh and discretization waves propagate in many
directions. The dispersion errors of this mesh (with a given number of
dofs per wavelength) is defined as the maximum of the dispersion error
over all discrete propagating waves%
\footnote{Alternatively one might define it  as the maximum over all
  true wave vectors of the distance to the nearest discrete
  propagating wave vector. A discussion of such issues is outside the
  scope of this paper.}.

The computation of discrete dispersion relations, not in our 2-D
setting, was discussed in \cite{ainsworth2004discrete}. In the
appendix a numerical computation is explained that does apply to the present
setting.  Figure~\ref{fig:dispersionErrors} shows the dispersion
errors for quad and tri meshes for the various finite-element schemes
(labeled `gal, $p$', with $p$ the order)
and the optimized finite differences (labeled `opt').
\begin{figure}
  \begin{center}
    (a) \hspace*{66mm}    (b)\\
    \includegraphics[width=70mm]{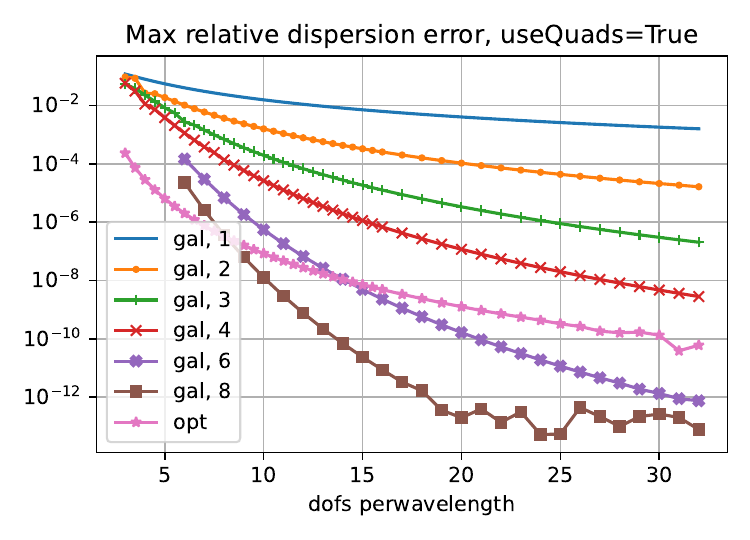}
    \includegraphics[width=70mm]{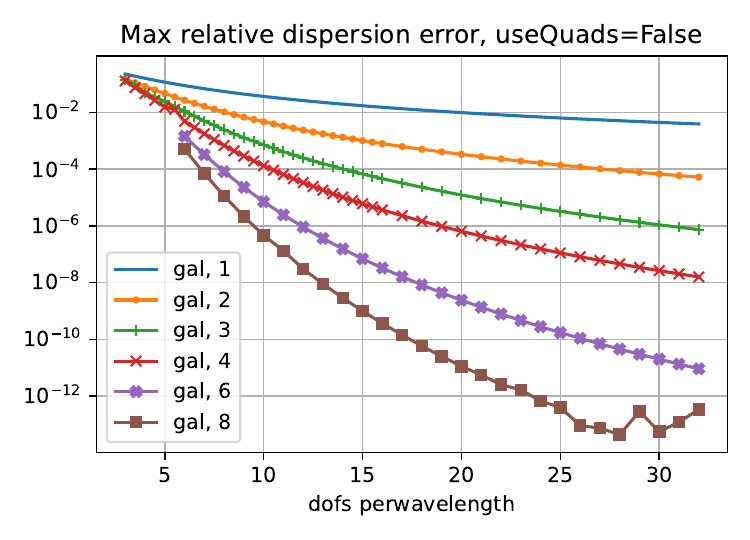}
    \end{center}
  \caption{Dispersion errors as a function of dofs per wavelength for
    (a) regular mesh of squares; and (b) regular mesh of triangles}\label{fig:dispersionErrors}
\end{figure}

\subsection{Asymptotic convergence rates from local Fourier analysis}

\begin{figure}
  (a) \hspace*{66mm} (b) \\
\includegraphics[width=70mm]{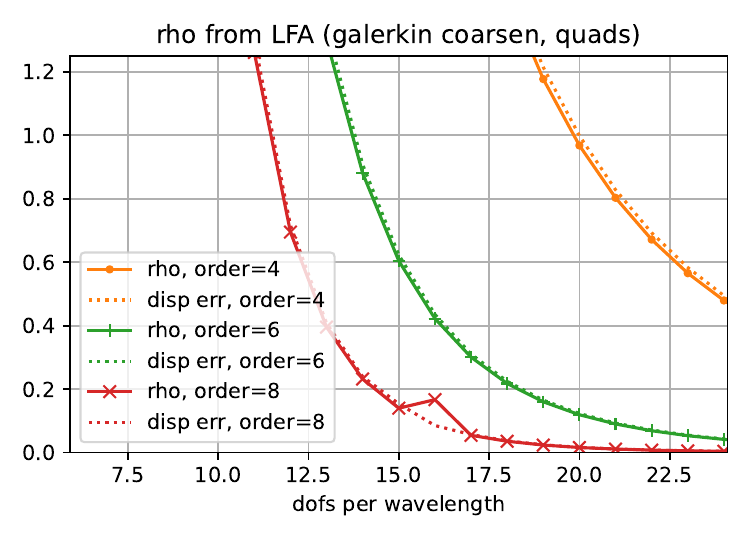}
\includegraphics[width=70mm]{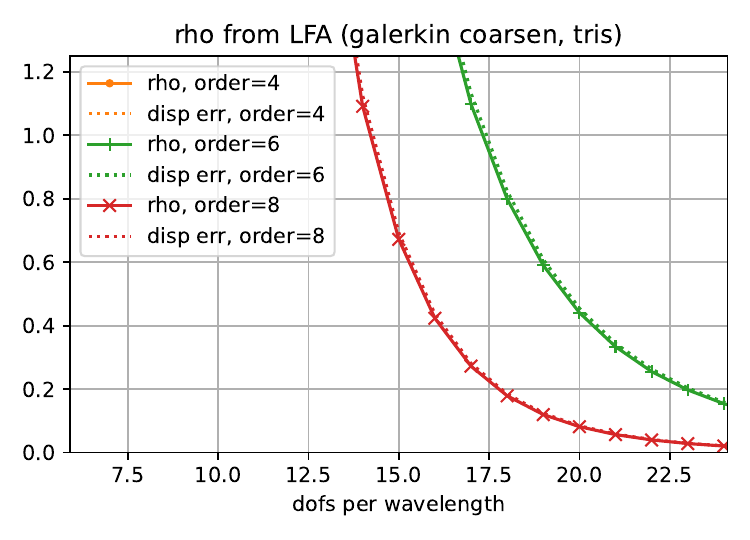}\\
  (c) \hspace*{66mm} (d) \\
\includegraphics[width=70mm]{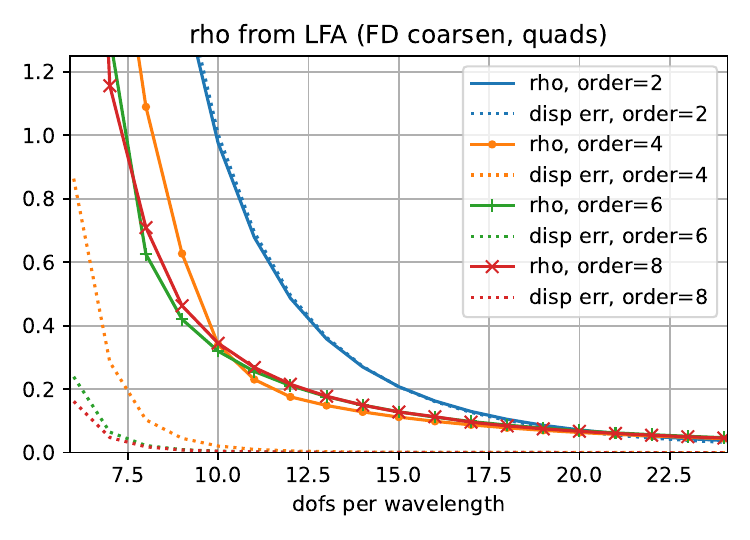}
\includegraphics[width=70mm]{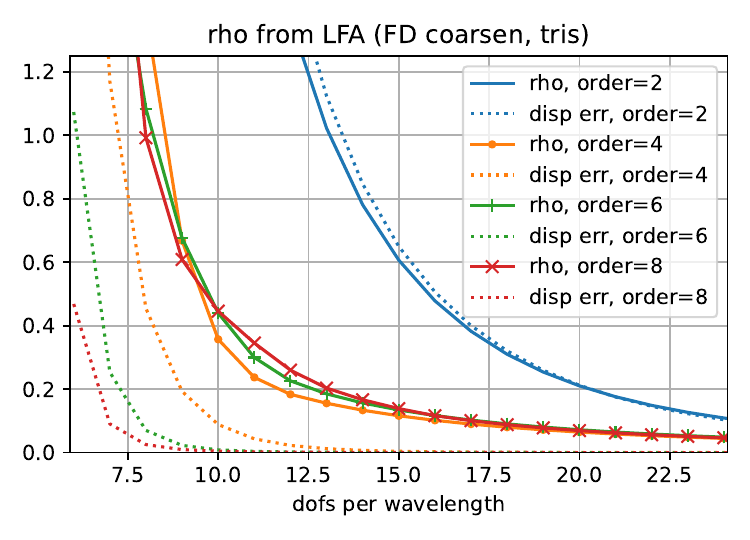}
  \caption{Convergence rate for standard Galerkin $p$-type coarsening (a,b)
    and optimized FD coarsening (c,d). Results are shown for square meshes
    (a,c) and triangular meshes (b,d). Square meshes and optimized FD
    coarsening results in better convergence, so that the method can
    be used with a smaller number of dofs per wavelength. In (a)
    order 2 curve is above the plot area, in (b) orders 2 and 4 are
    above the plot area.}
  \label{fig:rho_lfa_fd_vs_pcoarsen}
\end{figure}

\begin{figure}
  (a) \hspace*{66mm} (b) \\
\includegraphics[width=70mm]{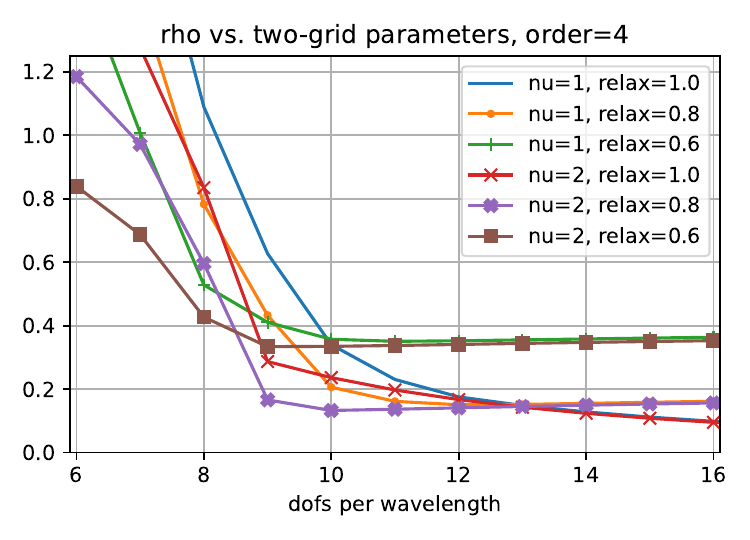}
\includegraphics[width=70mm]{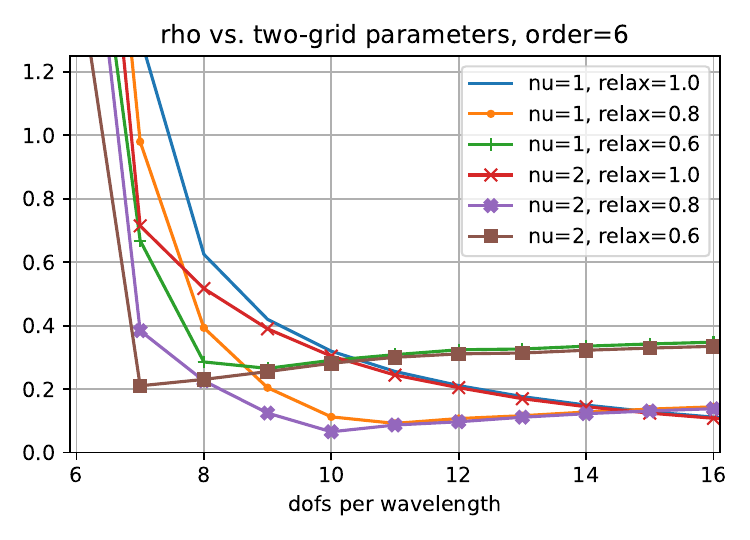}\\
  (c)\\
\includegraphics[width=70mm]{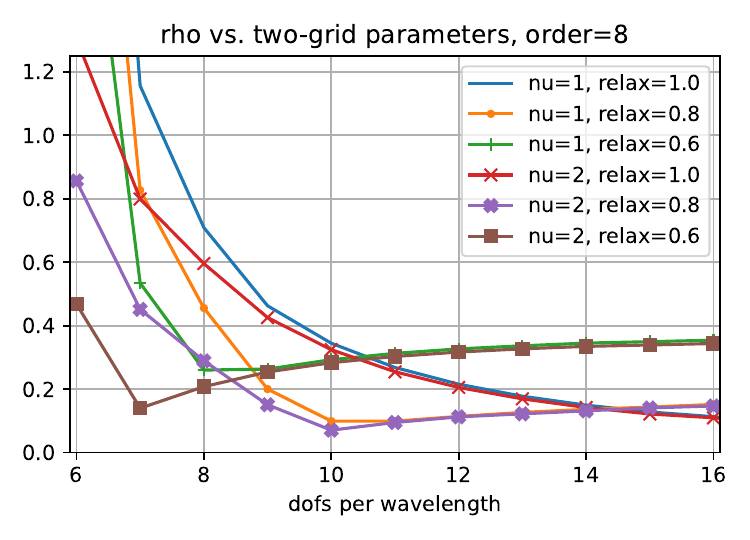}
  \caption{Convergence rate for optimized FD coarsening for different
    order and different multigrid parameters $\nu$ and relaxation
    factor $\omega_{\rm CGC}$. For low numbers of dofs per wavelength
    convergence is improved by increasing $\nu$ and choosing a
    relaxation factor below 1.}
  \label{fig:rho_lfa_mgpar}
\end{figure}

To obtain asymptotic convergence rates, the symbol computation methods
described in subsection \ref{sec:lfa_full_method} were implemented
using the Python programming language and the finite-element package
NGSolve. Basis transformations were implemented on top of NGSolve
finite-element spaces and used to obtain the matrix blocks $P_{0,\beta}$ in equation
(\ref{eq:symbol_sum}) where $P$ stands for the various operators.
The advantage of building this on top of NGSolve was that results
could easily be obtained for different orders of finite elements and
tri and square and meshes, because the package contains all these 
element types.

A first set of results is displayed in
Figure~\ref{fig:rho_lfa_fd_vs_pcoarsen}. Convergence rates are shown
as a function of the number of dofs per wavelength, which equals $\frac{2
\pi p}{k h}$. The figure shows that
convergence rates for Galerkin $p$-type coarsening are significantly
worse than those for optimized FD coarsening.
For Galerkin $p$-type coarsening the convergence rate is more or less
given by the estimated maximum dispersion error, which is in line with
results of the previous section. For the optimized FD coarsening,
convergence rates are larger than the dispersion errors except for
$p=2$. This may have to do with the fact that for optimized FD
coarsening, $p \ge 4$, the coarse space is not a subspace of the fine
space.
For FD coarsening, the convergence at low numbers of dofs per
wavelength can be somewhat improved by
increasing the number of smoother steps from 1 to 2 or by introducing a
relaxation factor in the coarse-grid correction, see
Figure~\ref{fig:rho_lfa_mgpar}.

The parameter $\alpha_{\rm s}$ was set to 0.2. The local Fourier
analysis shows that this value is small enough. Choices for this
parameter are best studied in conjunction with the domain decomposition
parameters, which is done in the next section.

\section{Numerical convergence results\label{sec:numerical_results}}

In this section numerical convergence results are presented. This has
multiple goals. One goal is to establish reasonable choices for the
parameters of the method listed in
subsection~\ref{subsec:parameters_of_method}, in addition to what was
found in the previous section. The other goals are to test the
performance of the new method and to compare it with a method using
standard Galerkin coarsening.  The performance is tested for many
different cases. The examples feature different boundary conditions,
different order finite elements, different problem sizes, and
different numbers of degrees of freedom per wavelength. All tests are
on the unit square with constant $k$, $\epsilon = 0$, and absorbing
boundary conditions or absorbing layers on at least two non-opposite
sides to avoid standing wave solutions.  Results for the new method
will be labeled `opt'. Results for the domain decomposition smoother
with Galerkin coarsening are labeled `gal'. To reduce the multitude
of results somewhat, only results for quad meshes are shown.

Implementation was done in Python, using NGSolve to obtain finite
element matrices, and scipy.sparse for sparse matrix computations.  In
case of an absorbing layer, a number of cells corresponding to about
40 dofs were added and over this distance, the complex contribution
$\epsilon$ increased with a sin-square profile from $0$ to
$\frac{2 k^2}{\pi}$, meaning $D=2$ damping per wavelength in the
notation of subsection~\ref{subsec:damped_Helmholtz_for_LFA}.

In Table~\ref{tab:its_vs_pars_OPT}, different choices for the
parameters of the numerical scheme were tested. In part (a), the
following parameters were varied: the complex shift for the smoother
$\alpha_{\rm s}$, the subdomain size for the domain decomposition,
and the number $n_{\rm dd}$ of domain decomposition iterations per
smoother step%
\footnote{Note that a priori it makes more sense to use one
domain decomposition iteration per smoother step and vary the number
of smoother steps, but we varied this parameter anyway. This is
because the smoother aims to reduce the residual of the actual problem
and not the complex-shifted problem.}.
For the subdomain sizes, the values around 4 to 6 approximately minimize
the total computation cost per smoother step in view of the cost
scaling and overlaps.
The conclusion is that there is not much dependence on these
parameters and that $\alpha_{\rm s} = 0.2$, $n_{\rm dd} = 1$ and
both choices for the subdomain size are reasonable. For order $p=2$
and $p=8$ essentially the same behavior was observed (this is not shown).
In part (b) of this table, smoother parameters were varied for
different numbers of degrees of freedom per wavelength (denoted ppw)
and orders $p=4$ and $p=6$. The conclusion is that for 8 or more
degrees of freedom per wavelength, the standard choices $n_{\rm s} =
1$ and $\omega_{\rm c} = 1$ are fine, while for $p=6$ and 6 degrees of
freedom per wavelength the number of coarse-level solves can be
reduced by using $n_{\rm s} = 2$ and $\omega_{\rm c} = 0.8$. For order
$p=8$ similar behavior as for $p=6$ was observed (for orders 2 and 4,
it does not make sense to use 6 degrees of freedom per wavelength due
to dispersion errors).

Note that for low orders the subdomains are smaller, in the sense of
physical length and number of degrees of freedom. Apparently this does
not affect the convergence much, presumably because long range wave
propagation is handled by the coarse-level solver.

For Galerkin coarsening we observed that the subdomain size has some
influence. In order to have a similar behavior for different values of
the order $p$, the subdomain size was fixed at around 40 dofs in each
direction, and the number of subdomain mesh cells in each direction
was fixed accordingly from 5 cells for order 8 up to 20 mesh cells for
order 2.

For Galerkin coarsening the results in table~\ref{tab:its_vs_pars_GAL}
show that in general it is
beneficial to introduce a complex shift in the coarse-level
operator. Optimal values, however, depend on the number of degrees of
freedom per wavelength and we did not find a choice optimal for all
values of the number of degrees of freedom per wavelength that were
tested. In the convergence tests in
Figure~\ref{fig:its_varying_examples1} we therefore
consider two possibilities: $\alpha_{\rm c} = 0.02$ and no coarse-grid
correction. We chose $\alpha_{\rm s} = 0.02$.

\begin{table}
  \begin{center}
    (a)\\
\begin{tabular}{|c|ccccc||ccccc|} \hline
  & $\alpha_{\rm s}$ = 0.1 & 0.15 & 0.2 & 0.3 & 0.4 &
   0.1 & 0.15 & 0.2 & 0.3 & 0.4\\ \hline 
  & \multicolumn{5}{c||}{$p=4$, subdomsize=4} & 
  \multicolumn{5}{c|}{$p=4$, subdomsize=6}\\ \hline
  $n_{\rm dd}$=1 & 7 & 7 & 7 & 7 & 7 & 7 & 6 & 6 & 6 & 7 \\                                   
  2 & 6 & 6 & 6 & 6 & 6 &  6 & 5 & 5 & 6 & 6 \\ \hline \hline
  & \multicolumn{5}{c||}{$p=6$, subdomsize=4}
  & \multicolumn{5}{c|}{$p=6$, subdomsize=6}\\ \hline
$n_{\rm dd}$=1 & 
  9 & 8 & 8 & 8 & 8  & 9 & 8 & 8 & 8 & 8 \\
2 &   7 & 7 & 7 & 7 & 8 &  7 & 7 & 7 & 7 & 8\\ \hline
\end{tabular} \\

\medskip

(b)\\
\begin{tabular}{|c|c||ccc||cccc|} \hline
  && \multicolumn{3}{c||}{$p=4$} & 
  \multicolumn{4}{c|}{$p=6$}\\ \hline
  && ppw=8 & 10 & 14
  & 6 & 8 & 10 & 14 \\ \hline
  & $\omega_{\rm c}$ = 1.0     & 9 & 7 & 5     & 19 & 8 & 7 & 5\\       
 $n_{\rm s}$ = 1 & 0.8      & 8 & 6 & 7     & 16 & 6 & 5 & 7\\    
  & 0.6                   & 11 & 11 & 12  & 15 & 8 & 10 & 11\\   \hline
  & $\omega_{\rm c}$ = 1.0     & 8 & 6 & 5     & 16 & 8 & 7 & 5\\    
 $n_{\rm s}$ = 2 & 0.8      & 7 & 6 & 7     & 12 & 5 & 5 & 7\\    
  & 0.6                   & 11 & 11 & 12  & 11 & 8 & 10 & 11    \\ \hline
\end{tabular}
\end{center}
\caption{Iteration counts vs.\ parameter choices for `OPT'
  coarsening,
  (a) varying the complex shift parameter
    $\alpha_{\rm s}$, domain decomposition iterations $n_{\rm dd}$,
    and subdomain size.
    For order $p=4$, results are for 10 ppw, for order $6$ for 8 ppw.
    We choose $n_{\rm dd} = 1$ and $\alpha_{\rm
      s} = 0.2$ for further computations. Dependence on these
    quantities is weak. (b) varying number of smoother steps
    $n_{\rm s}$ and coarse-grid correction relaxation factor
    $\omega_{\rm c}$.\label{tab:its_vs_pars_OPT}}
\end{table}
\begin{table}
  \begin{tabular}{|c|cccc||cccc|} \hline 
 & \multicolumn{4}{c||}{$p=6$, ppw=6} 
 & \multicolumn{4}{c|}{$p=6$, ppw=8} \\ \hline
 & $\alpha_{\rm s}$ = 0.01 & 0.02 & 0.05 & 0.1&  0.01 & 0.02 & 0.05 & 0.1\\ \hline 
  $\alpha_{\rm c}$=
0.0   & $>200$ & $>200$ & 110 & $>200$ & 15 & 15 & 14 & 15\\
0.01  & $>200$ & $>200$ & 61  & 102 & 14 & 14 & 14 & 14\\
0.02  & $>200$ & 177 & 46  & 73  & 14 & 13 & 13 & 14\\
0.05  & $>200$ & 30  & 31  & 47  & 15 & 15 & 15 & 16\\
0.1   & 35  & 21  & 25  & 35  & 18 & 18 & 18 & 19\\
0.2   & 21  & 19  & 21  & 29  & 22 & 21 & 21 & 23\\
0.5   & 21  & 20  & 22  & 25  & 25 & 24 & 25 & 28\\
noCGC & 23  & 22  & 24  & 29  & 29 & 27 & 28 & 32\\ \hline
\end{tabular} 
\caption{Iteration counts vs.\ parameter choices for `GAL'
  coarsening, varying complex shift parameters for the smoother
  and coarse-grid correction.\label{tab:its_vs_pars_GAL}}
\end{table}
\begin{figure}
  \begin{center}
    \hspace*{3mm} (a) \hspace*{33mm} (b)  \hspace*{33mm} (c) \hspace*{0mm}\\
    \includegraphics[height=32mm]{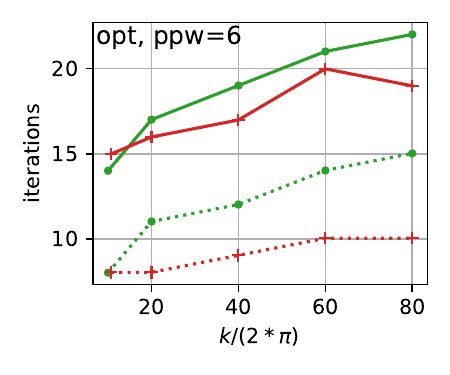}
    \includegraphics[height=32mm]{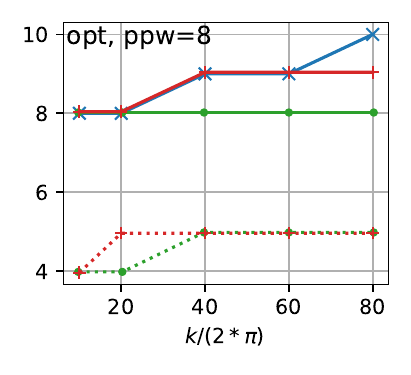}
    \includegraphics[height=32mm]{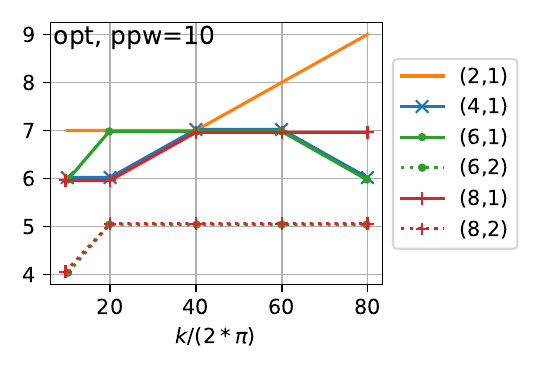}\\
    \hspace*{3mm} (d) \hspace*{33mm} (e)  \hspace*{33mm} (f) \hspace*{0mm}\\
    \includegraphics[height=32mm]{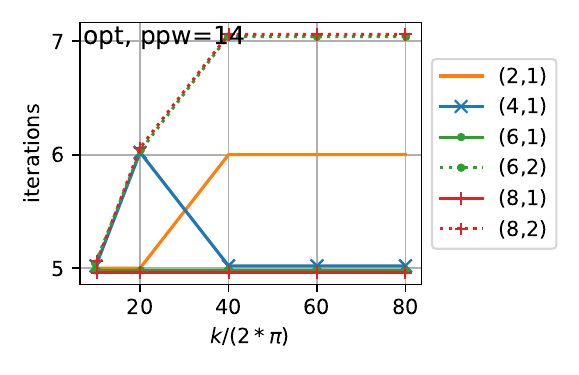}
    \includegraphics[height=32mm]{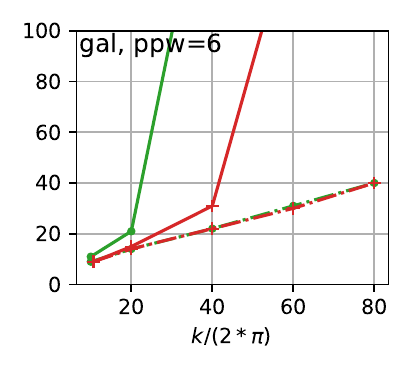}
    \includegraphics[height=32mm]{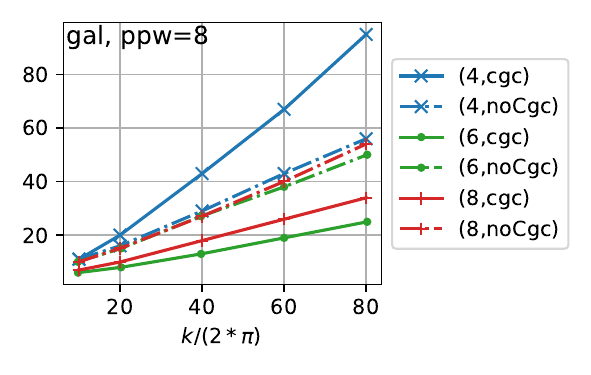}\\
    \hspace*{3mm} (g) \hspace*{33mm} (h) \\
    \includegraphics[height=32mm]{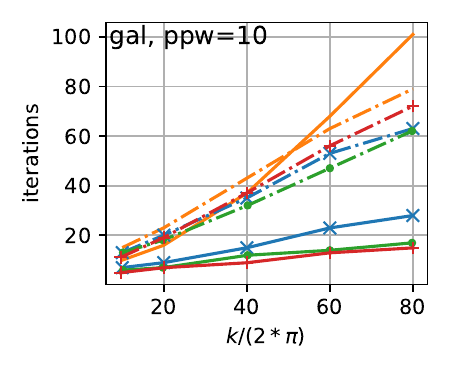}
    \includegraphics[height=32mm]{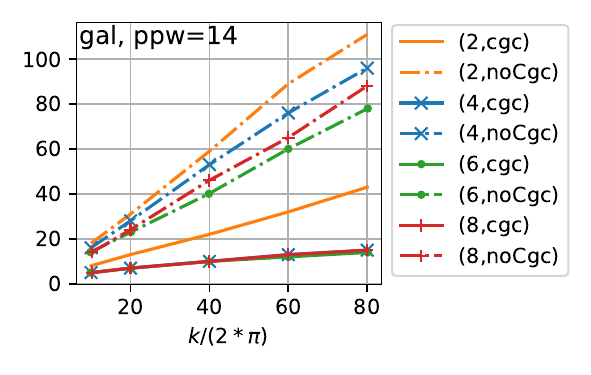}
  \end{center}
  \caption{Convergence vs.\ problem size. (a-d) `OPT' coarsening,
    6,8,10,14 dofs per wavelength with two choices of two-grid parameters (see main text);
    (e-h) for `GAL' coarsening with and without coarse-grid
    correction (see main text). `OPT' converge is 
    substantially better.\label{fig:its_varying_examples1}}
\end{figure}
\begin{figure}
  \begin{center}
    \hspace*{-11mm} (a) \hspace*{34mm} (b) \hspace*{2mm}\\
    \includegraphics[height=36mm]{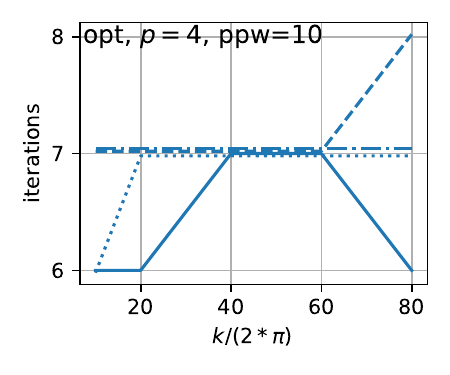}
    \includegraphics[height=36mm]{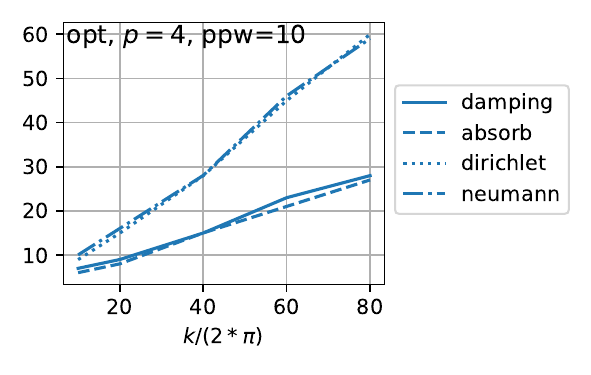}
  \end{center}
  \caption{Convergence for different choices of boundary condition (a)
  'OPT' coarsening: convergence more or less independent of boundary
  condition; (b) `GAL' coarsening: iteration count roughly doubles if
  Dirichlet or Neumann boundary conditions are present on two
  non-opposite sides.\label{fig:conv_bdy_cond}}
\end{figure}

In Figure~\ref{fig:its_varying_examples1} the convergence the method
is shown as a function of problem size for different values of the
finite-element order and the 
number of DOFs per wavelength. On the $x$-axis is the quantity
$\frac{k}{2\pi}$ which equals the domain size in terms of the number
of wavelengths. For the new method, the iteration counts are low and
have weak dependence on problem size. When Galerkin coarsening is
used, iteration counts are higher and generally increase linearly with
problem size.
In Figure~\ref{fig:conv_bdy_cond}, different boundary conditions were
tested. Introducing Neumann or Dirichlet
boundary conditions on two non-opposite sides has more or less the
same effect as doubling the problem size.

\section{Discussion and conclusions\label{sec:discussion_conclusions}}

In this work we have tested a new two-grid method for finite element
Helmholtz problems, and compared it to a variant using standard
Galerkin coarsening but with the same novel smoother. 

The method differs from previous method in several ways. The main
difference with previous two-level domain decomposition methods (see
\cite{graham2017domain} and references) is in the choice
of coarse mesh and the choice of coarse level operator. The number of
degrees of freedom is reduced by a factor 2 in each direction and
dispersion error minimizing finite differences are chosen as coarse
level method. Compared to multigrid methods, the choice of smoother
and the choice of coarse level discretization are new.

The main conclusion is that the new method converges well according to local
Fourier analysis, while numerical iterations counts are small and more
or less independent of problem size.

The comparison with dispersion errors, i.e.\ the discussion of
section~\ref{sec:lfa_toy}, and the data on $R$ in
Figures~\ref{fig:rho_lfa_fd_vs_pcoarsen}
and~\ref{fig:rho_lfa_mgpar}
show
that the dispersion errors strongly correlate with the good or poor
convergence. In case of Galerkin coarsening convergence rate curves
follow closely the dispersion errors curves.  In the new
method, the dispersion errors (the ratio $R$) is lower than the
asymptotic convergence rates. Our conjecture is that this is caused by
the fact that the coarse finite-element space is not a subspace of the
fine finite-element space.

The implication for the design of multigrid methods is that it is
essential for good convergence that the dispersion errors of the
coarse level method are small (more precisely the dispersion
relations of coarse and fine level schemes closely match).

\smallskip

\section*{Declarations}

\subsection*{Funding}

No funding is to be declared.

\subsection*{Conflict of interest statement}

The authors declared that they have no conflict of interest.

\subsection*{Acknowledgement}

Not applicable.

\bibliographystyle{abbrv}
\bibliography{helmSmooth}

\appendix
\section{Computation of dispersion errors}

This section contains a brief description of the numerical computation of dispersion
errors and some issues that have to be addressed. A fully detailed description
is outside the scope of this work.

In the continuous case, dispersion is about the wave vectors of
`propagating' waves, plane wave solutions
$u = e^{i x \cdot \xi}$ to the homogeneous
Helmholtz equation $-\Delta u - k^2 u =0$. In the discrete case it is
about Bloch wave solutions to the finite-element equations for the
discretized homogeneous Helmholtz equation, which exist for some $\theta$ if
\begin{equation} \label{eq:det_symbol_is_zero}
  \det \widehat{A}(\theta) = 0 .
\end{equation}

There are a couple of (conceptual) steps to determine dispersion
errors. First one finds the solutions to
(\ref{eq:det_symbol_is_zero}). The second step is to find the physical
wave vectors $\xi$ associated with the solutions $\theta$. The
dispersion error for some propagating Bloch wave is then (we consider
the error without sign) $\left| \frac{\| \xi \|}{k} - 1 \right|$. The
dispersion error depends in general on the propagation direction. To
obtain a single number we take the maximum over all propagating Bloch
waves. The discretization error is determined as a function of the
discretization and the number of dofs per wavelength.

Finding the zeros of (\ref{eq:det_symbol_is_zero}) can be done e.g.\
by numerical equation solving applied to the in absolute
value smallest eigenvalue of $\widehat{A}(\theta)$.
Identifying a solution, say $\theta^*$, with a wave vector, say
$\xi^*$ is actually a non-trivial issue. The vector $\theta$ is really in a torus
$\TT_{2\pi}^d$, and a choice of $\theta$ is really an equivalence
class
\begin{equation} \label{eq:equivalence_relation}
   [ \theta ] = \{ \theta + \alpha 2 \pi \, : \, \alpha \in \ZZ^d \} .
\end{equation}
Because of the multiple degrees of freedom per mesh
cell the Nyquist principle says that on the finite-element mesh with
order $p$ elements, wave
vectors in $\left[ - \frac{p \pi}{h}, \frac{p \pi}{h} \right]^d$ can be
represented, so the possible wave vectors are
\begin{equation} \label{eq:xi_of_theta_and_alpha}
  \xi = h^{-1} ( \theta + \alpha 2 \pi) , \qquad \text{for some
    $\alpha \in \ZZ^d$ s.t.\ $\xi \in \left[ - \frac{p \pi}{h}, \frac{p \pi}{h} \right]^d$ }.
\end{equation}
In this work we will choose $\xi$ as the choice according to
(\ref{eq:xi_of_theta_and_alpha}) for which the dispersion error
$\left| \frac{\| \xi \|}{k} - 1 \right|$ is minimal.

\NewInRevision{For the finite-difference discretization of
subsection~\ref{subsec:fd_discretization}, the discrete dispersion
relation is a closed, star-shaped curve around the origin, and
relatively easy to determine. For finite elements this is frequently
not true, in particular when there are less than two mesh cells per
wavelength. Next we sketch an argument why the discrete dispersion
relation becomes geometrically more complicated when the number
of mesh cells per wavelength is less than about two.}

With the continuous Helmholtz equation one can also associate a matrix
valued Bloch-like symbol, say $\sigma_{\rm true}(\theta)$.
\NewInRevision{While the precise expression of this symbol would
  depend on the choice of basis (cf.\ (\ref{eq:basis_E_W_theta})), }
the zeros
of $\det \sigma_{\rm true}(\theta)$ must correspond to the propagating
waves, this is the set
\begin{equation} \label{eq:equivalence_class_disprel}
  \{ [ h \xi ]  \, : \, \| \xi \| = k \}
\end{equation}
where the square brackets refer to the equivalence
classes (\ref{eq:equivalence_relation}).
The equations for discrete propagating waves $\det \widehat{A}(\theta) = 0$ can be considered
a perturbation to the continuous equations $\det \sigma_{\rm
  true}(\theta) = 0$. 
The fact that (\ref{eq:equivalence_class_disprel}) involves
equivalence classes now means it can develop self intersections.  There
are two cases. If $hk < \pi$, then
$h \xi \neq h \xi' + \alpha 2 \pi$ for $\xi, \xi'$ with
$\| \xi \| = \| \xi' \| = k$ and $\alpha \neq 0$ and
$\det \sigma_{\rm true}(\theta)$ only has single zeros that are
in general stable under
perturbation and its zeroset
is topologically a circle.  The other case is when $hk > \pi$ which
corresponds to the case that there are less than two finite-element
cells per wavelength. The true dispersion relation
(\ref{eq:equivalence_class_disprel}) then has self-intersections in
the $\theta$-torus due to the equivalence relation
(\ref{eq:equivalence_relation}).  The discrete equation
$\det \widehat{A}(\theta) = 0$ is a perturbation of the
equation $\det \sigma_{\rm true}(\theta) = 0$. The self-intersections
are not stable under perturbations and the discrete dispersion
relation is no longer a perturbation of a single circle, complicating
the computations.

To estimate dispersion errors for different order FE schemes and
different numbers of degrees of freedom per wavelength, and in view of
the above, several
procedures were implemented to find the maximum dispersion error in
the sense just defined. Let $\lambda_1(\widehat{A}(\theta))$ denote
the smallest eigenvalue in absolute value of $\widehat{A}(\theta)$.
In some cases (when dispersion error was not too small), the solutions
of (\ref{eq:det_symbol_is_zero}) could be found with sufficient
accuracy (we aimed for two digits)
by simply sampling $\lambda_1(\widehat{A}(\theta))$, then
finding sign changes, and numerically zooming in on the zeros
using a Newton-like method.  For very small dispersion errors,
sampling was done only near self intersection point of the true dispersion
relation. Away from those points the curve of zeros was close to the
correct dispersion relation and could be approximated by looking for
zeros in the direction normal to the true dispersion relation. This,
in summary,
yielded the estimates in Figure~\ref{fig:dispersionErrors} for the
dispersion errors.

\end{document}